\documentclass[11pt]{amsart}
\usepackage{amsmath}
\usepackage{amssymb}
\usepackage{amsfonts}
\usepackage{graphicx}
\usepackage{epsfig}
\usepackage[all]{xy}

\theoremstyle{plain}
\newtheorem{theorem}{Theorem}
\newtheorem{corollary}[theorem]{Corollary}
\newtheorem{lemma}[theorem]{Lemma}
\newtheorem{proposition}[theorem]{Proposition}

\theoremstyle{definition}
\newtheorem{definition}{Definition}
\newtheorem{remark}[theorem]{Remark}

\newtheoremstyle{citing}
  {3pt}
  {3pt}
  {\itshape}
  {}
  {\bfseries}
  {.}
  {.5em}
  {\thmnote{#3}}

\theoremstyle{citing}


 \textwidth=15.5truecm \oddsidemargin=.5truecm \evensidemargin=-.5truecm
 \textheight=22.5truecm
 \topmargin=0truecm

 \parindent=0cm

\newcommand{\RR}{\mathbb{R}}

\newcommand{\CC}{\mathbb{C}}

\newcommand{\NN}{\mathbb{N}}

\newcommand{\ZZ}{\mathbb{Z}}

\newcommand{\QQ}{\mathbb{Q}}

\newcommand{\sS}{\mathbb{S}}

\def\B{{\mathcal B}}

\def\E{{\mathcal E}}

\def\M{{\mathcal M}}

\def\P{{\mathcal P}}

%
%



\parindent=0cm
\begin{document}
\title{Eigenvalues and strong orbit equivalence.}

\author[M.I. Cortez, F. Durand, and S. Petite ]{Mar\'{\i}a Isabel Cortez$^{1,3}$,  Fabien Durand$^{2}$, and Samuel Petite$^{2}$}

\address{$^1$Departamento de Matem\'atica y Ciencia de la Computaci\'on, Facultad de Ciencia, Universidad de Santiago de Chile, Av. Libertador Bernardo O'Higgins 3363, Santiago, Chile.}
 
\address{$^2$Laboratoire Ami\'enois
de Math\'ematiques Fondamentales et Appliqu\'ees, CNRS-UMR 7352, Universit\'{e} de Picardie Jules Verne, 33 rue Saint Leu, 80000 Amiens, France.} 

\address{$^3$F\'ed\'eration de Recherche ARC Math\'ematiques, CNRS-FR 3399, Universit\'{e} de Picardie Jules Verne, 33 rue Saint Leu, 80000 Amiens, France.\vspace{3mm}} 
 
 \thanks{M. I. Cortez was partially funded by Anillo Research Project 1103 DySyRF and Fondecyt Research Project 1140213. She thanks the hospitality of the LAMFA UMR 7352 CNRS-UPJV and the "poste rouge" CNRS program. }

\email{maria.cortez@usach.cl}
\email{fabien.durand@u-picardie.fr}
\email{samuel.petite@u-picardie.fr}

\maketitle{}

\begin{abstract}
We give conditions on the subgroups of the circle to be realized as the subgroups of eigenvalues of minimal Cantor systems  belonging to a determined strong orbit equivalence class. Actually, the additive group of continuous eigenvalues  $E(X,T)$ of the minimal Cantor system $(X,T)$ is a subgroup of the  intersection $I(X,T)$ of all the images of the dimension group by its traces. We show, whenever the infinitesimal subgroup of the dimension group associated to $(X,T)$ is trivial,  the quotient group $I(X,T)/E(X,T)$ is torsion free. We give examples with non trivial infinitesimal subgroups where this property fails.  We also provide some realization results. 
\end{abstract}

\section{Introduction}
Two dynamical systems are  orbit equivalent if there is a  bijection between their phase spaces that preserves their structures (measure preserving, topological, etc.) and  induces a one-to-one correspondence between their orbits.  
The notion of orbit equivalence arises first in the context of probability measure preserving group actions (measurable orbit equivalence),  as a consequence of the study of von Neumann algebras \cite{Murray&vonNeumann:1936, Singer:1955}.  One of the most remarkable results  in this theory establishes that there is only one orbit equivalence class among the free ergodic  probability measure preserving  actions of amenable groups  \cite{Dye:1959, Ornstein&Weiss:1980}.

Motivated by the measurable orbit equivalence results, in particular, the characterization of the   orbit equivalence classes in terms of von Neumann algebras \cite{Krieger:1969, Krieger:1976},   Giordano, Putnam and Skau  obtain in \cite{Giordano&Putnam&Skau:1995} one of the most important results in the context of the orbit equivalence from a topological point of view: the orbit equivalence classes of the minimal $\ZZ$-actions on the Cantor set are characterized in terms of the $K_0$ group of the associated $C^*$-algebra (see \cite{Renault:2009,Tomiyama:1987} for an interplay between $C^*$-algebras and dynamics).  As a consequence, they obtain that there are as many orbit equivalent classes as reduced simple dimension groups with distinguished order unit. 
Thus, unlike the measurable setting, in the topological context it is natural to ask for the dynamical properties which are preserved under orbit (or strong orbit) equivalence. For instance, in  \cite{Herman&Putnam&Skau:1992}  it is shown that the set of invariant probability measures of a given minimal Cantor system is affinely isomorphic to the set of traces of the associated dimension group. Thus the set of invariant probability measures is preserved, up to affine homeomorphism, under strong orbit equivalence. On the contrary, within a strong orbit equivalence class it is possible to find a minimal Cantor systems having any possible entropy (see \cite{Boyle&Handelman:1994},  \cite{Ormes:1997} and \cite{Sugisaki:2003} for the general case).

In this paper, we study the relation between (strong) orbit equivalence and the spectral properties of a system. We know from \cite{Ormes:1997} that strong orbit equivalent minimal Cantor systems share the same subgroup of rational continuous eigenvalues. Thus, if the subgroup of rational continuous eigenvalues of  a minimal Cantor system  is not cyclic, then within its strong orbit equivalence class there is no mixing minimal Cantor systems.
It is no longer true for the orbit equivalence as shown again in \cite{Ormes:1997}. 
Indeed, Ormes proved (Theorem 8.2 in \cite{Ormes:1997}) that in a prescribed orbit equivalence class it is possible to realize  any countable subgroup of the circle as a group of measurable eigenvalues. 

In this work we investigate the case of non-rational eigenvalues and whether the dimension groups induce restrictions (other than those due to the rational eigenvalues) on the groups of eigenvalues that can be realized within this given strong orbit equivalence class.

It happens that a first restriction has been shown in \cite{itza-ortiz:2007} : the additive group of eigenvalues, $E(X,T)$, of a minimal Cantor system $(X,T)$, is a subgroup of the intersection of all the images of the dimension group by its traces. 
Dynamically speaking, it is a subgroup of $I (X,T) = \cap_{\mu \in \mathcal{M} (X,T)} \left\{ \int f d\mu | f \in C (X,\mathbb{Z}) \right\}$, where $\mathcal{M} (X,T)$ is the set of $T$-invariant probability measures of $(X,T)$ and $C (X,\mathbb{Z})$ is the set of continuous functions from $X$ to $\mathbb{Z}$.
An other proof of this observation can  be found in \cite{Cortez&Durand&Host&Maass:2003} but it was not pointed out.

In this paper we prove the following strong restriction.

\begin{theorem}
\label{theo:main}
Suppose that $(X,T)$ is a minimal  Cantor system  such that the infinitesimal subgroup of  the dimension group $K^0(X,T)$ is trivial.  
Then the quotient group  $I(X,T)/E(X,T)$ is torsion free.
\end{theorem}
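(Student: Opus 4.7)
The plan is to recast the torsion-freeness statement as a divisibility problem inside the dimension group $G := K^0(X,T)$ and then exploit triviality of $\mathrm{Inf}(G)$ to pass from a trace-by-trace statement to a global one.

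Fix $\alpha \in I(X,T)$ and suppose $n\alpha \in E(X,T)$ for some integer $n \ge 1$. Since $X$ is totally disconnected, any continuous eigenfunction for $e^{2\pi i n\alpha}$ lifts to some $h \in C(X, \mathbb{R})$ satisfying
\[ h \circ T - h = n\alpha + m, \qquad m \in C(X, \mathbb{Z}). \]
Integrating against any $\mu \in \mathcal{M}(X,T)$ yields $\int m\, d\mu = -n\alpha$; in other words, the class $[m] \in G$ is sent to the constant $-n\alpha$ by every trace on $G$.

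I claim that it suffices to prove $[m] \in nG$. Indeed, if $[m] = n[M]$ in $G$, then one can write $m = nM + (f - f\circ T)$ for some representatives $M, f \in C(X, \mathbb{Z})$. A direct computation then shows that $H := (h + f)/n \in C(X, \mathbb{R})$ satisfies $H \circ T - H = \alpha + M$, which exhibits $\alpha$ as a continuous eigenvalue. (Morally, total disconnectedness of $X$ allows one to freely take continuous $n$-th roots modulo $\mathbb{Z}$, and the obstruction is precisely the class of $m$ in $G/nG$.)

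The whole argument thus reduces to establishing $[m] \in nG$. Since $\mathrm{Inf}(G) = 0$, the trace evaluation $\pi \colon G \hookrightarrow \mathrm{Aff}(S(G,u))$ is injective, so $[m] \in nG$ is equivalent to the existence of a single class $[M'] \in G$ whose image under every trace is the constant $-\alpha$: then $n[M'] - [m]$ has zero image under every trace, lies in $\mathrm{Inf}(G)$, and therefore vanishes. The hypothesis $\alpha \in I(X,T)$ provides, a priori, only a trace-by-trace choice $[M'_\tau]$ with $\tau([M'_\tau]) = -\alpha$, so the crux of the argument is to upgrade this pointwise realization to a uniform one. This is where I expect the main technical difficulty to lie: one must combine the fact that $[m]$ is already globally constant (equal to $-n\alpha$) on the trace space with Riesz interpolation and simplicity of $G$, likely via a Bratteli–Vershik presentation of $(X,T)$ in which, at a sufficiently deep level, the divisibility of $[m]$ by $n$ becomes a concrete integer-vector problem solvable exactly because $\alpha \in I(X,T)$.
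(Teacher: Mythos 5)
Your reduction is correct as far as it goes, and it is a genuinely different route from the paper's, but it has one real gap, located exactly where you yourself place it: you need a \emph{single} class of $K^0(X,T)$ sent to $-\alpha$ by every trace simultaneously, i.e.\ a single $g\in C(X,\mathbb{Z})$ with $\int g\,d\mu=\alpha$ for all invariant $\mu$ at once. The definition of $I(X,T)$ only hands you one function per measure, and without the uniform choice the element $n[M']-[m]$ is not even defined, so nothing is proved. The missing statement is precisely Lemma~\ref{lemma 0} of the paper. Your guess about how it should be established (Riesz interpolation, descending a Bratteli--Vershik presentation) is off target: the paper's proof is a short Baire-category argument. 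Setting $\mathcal M_g=\{\mu\in\mathcal M(X,T):\int g\,d\mu=\alpha\}$, each $\mathcal M_g$ is closed and convex, $\mathcal M(X,T)=\bigcup_{g\in C(X,\mathbb{Z})}\mathcal M_g$ is a countable union, so some $\mathcal M_{g_0}$ has nonempty interior, and an affine function on a compact convex set that is constant on a nonempty open subset is constant. So the gap is real, but it is closed by a soft, self-contained lemma that the paper proves independently of everything else.

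Granting that lemma, every remaining step of your proposal checks out: on a Cantor space a unimodular eigenfunction lifts to $e^{2\pi ih}$ with $h\in C(X,\mathbb{R})$, giving $h\circ T-h=n\alpha+m$ with $m\in C(X,\mathbb{Z})$ and $\tau([m])=-n\alpha$ for every trace; then $[m]+n[g]$ is infinitesimal, hence zero, hence $m+ng=f\circ T-f$ for some $f\in C(X,\mathbb{Z})$, and $H=(h-f)/n$ satisfies $H\circ T-H=\alpha-g$, so $e^{2\pi iH}$ witnesses $\alpha\in E(X,T)$. The hypothesis ${\rm Inf}(K^0(X,T))=0$ is used exactly once and indispensably (in the paper's example with $I(X,T)=\mathbb{Q}$ and $E(X,T)=\mathbb{Z}$ one still gets $[m]+n[g]$ infinitesimal, but cannot conclude it is a coboundary). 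This route is very different from the paper's, which argues by contradiction through the tower machinery: the decomposition $k\alpha P_mH_1=v_m+w_m$, the entrance-time criterion of Proposition~\ref{prop:condcont}, Proposition~\ref{prop:tendtozero} and Lemma~\ref{the lemma} to guarantee that $\frac1kP_{n,m}w_m$ is non-integral infinitely often, and a final divisibility computation along towers of height $h_p(i)$. Once Lemma~\ref{lemma 0} is supplied, your argument bypasses all of that and is substantially shorter; what the paper's route buys in exchange is the independently interesting necessary condition of Proposition~\ref{prop:tendtozero} and the quantitative Lemma~\ref{the lemma}.
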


To illustrate this result, take $K^0 (X,T) = \mathbb{Z} + \alpha \mathbb{Z} = I(X,T)$, with $\alpha$ irrational. This is the case for a Sturmian subshift. Then within the strong orbit equivalence class of $(X,T)$ the only groups of continuous eigenvalues that can be realized are $\mathbb{Z}$, which will provide topologically weakly mixing minimal Cantor systems, and $\mathbb{Z} + \alpha \mathbb{Z}$.
Moreover, both can be realized, in the first case using results in \cite{Ormes:1997} and in the second case it is realized by a Sturmian subshift. 

Relations between additive eigenvalues and topological invariants can be found in \cite{Schwartzman:1957,Riedel:1982,Packer:1986,Exel:1987}, but they do not apply to Cantor systems. 

\medskip

In Section \ref{definitions} we recall the concept of Kakutani-Rohlin partitions that will be necessary through this paper. The next section is concerns the notions and definitions we will need.
In particular, we recall the algebraic notions and dynamical interpretations  of dimension group, trace, infinitesimal and rational subgroup.
Section \ref{section:main} is devoted to the  proof of our main result: Theorem \ref{theo:main}. 
To this aim we use a precise description of entrance times with respect to some well-chosen Kakutani-Rohlin partitions.  We follow the approach proposed in \cite{Bressaud&Durand&Maass:2005,Bressaud&Durand&Maass:2010} to tackle eigenvalue problems.
Apart from Theorem \ref{theo:main}, there are three results that could be of independent interest. 
We obtain a new necessary condition to be an eigenvalue (Proposition \ref{prop:tendtozero}). 
We give an elementary proof of the fact that $E(X,T)$ is included in $I(X,T)$ (Proposition \ref{prop:eigincludedinimage}).
For every $\alpha$ in $I(X,T)$, we show there exists a continuous function $f :X \to \mathbb{Z}$ such that $\alpha = \int f d\mu$ for all $T$-invariant measure $\mu$ (Lemma \ref{lemma 0}).

In the last section we provide realization examples around Theorem \ref{theo:main}.

\section{Definitions and background}
\label{definitions}

\subsection{Dynamical systems}

We introduce  here the notations and recall some classical facts. We refer to \cite{Petersen:1983} for a more detailed expository. 
By a {\it topological dynamical system}, we mean a couple $(X,T)$
where $X$ is a compact metric space and $T: X \to X$ is a homeomorphism.
We say that it is a {\it Cantor system} if $ X $ is a Cantor space;
that is, $ X $
has a countable basis of its topology which consists of closed and
open sets (clopen sets) and does not have isolated points. It is {\em minimal} if it does not contain any non empty proper closed  $T$-invariant subset. 
A dynamical system $(Y,S)$ is called a {\em factor}  of $(X,T)$ if there is a continuous 
and onto map $\phi: X \rightarrow Y$, called a {\em factor map}, such that $\phi \circ T = S \circ \phi$. 
If $\phi$ is one-to-one we say that $\phi $ is a {\em conjugacy} and that $(X,T)$ and
$(Y,S)$ are {\em conjugate}. If $(X,T)$ is minimal and $\phi:X\to Y$ is a factor map for which there exists $x\in X$ such that $\sharp\phi^{-1}(\phi(x))=1$, we say that  $\phi$ is an {\it almost 1-1 factor map} and $(X,T)$ is an {\it almost 1-1 extension} of $(Y,S)$. 

We denote by $\mathcal{M} (X,T)$ the set of all $T$-invariant probability measure $\mu$, defined on
the Borel $\sigma$-algebra $\B_X$ of $X$. 
For such a measure $\mu$, the  quadruple $(X, \B, \mu_X, T)$ is called a {\em measurable dynamical system}.
This system is called {\em ergodic} if any $T$-invariant measurable set has  measure $0$ or $1$.
Two measurable dynamical systems $(X, \B, \mu, T)$ and $(Y, \B', \nu, S) $  are {\em measure theoretically conjugate} if we can find  invariant subsets $X_{0} \subset X$, $Y_{0} \subset Y$ with $\mu(X_{0}) = \nu(Y_{0}) =1$ and a bimeasurable  bijective map $\psi \colon X_{0} \to Y_{0}$ such that $S \circ \psi = \psi \circ T$ and $\mu(\psi^{-1} B) = \nu(B)$ for any $B \in \B'$.

A complex number $\lambda$ is a {\it continuous eigenvalue} (resp. a {\em measurable eigenvalue})
of $(X,T)$ if there exists a continuous (resp. integrable with respect to an invariant measure)  function $f : X\to \CC$,
$f\not = 0$, such that
$f\circ T = \lambda f$; $f$ is called a {\it continuous
eigenfunction}, associated to $\lambda$ . Of course any continuous eigenvalue is a measurable one for any fixed measure. Hence, every eigenvalue is of modulus 1, i.e.,  belongs to the circle $\sS^1 = \{ \lambda \in \mathbb{C}; \ |\lambda | = 1 \}$, and every
eigenfunction has a constant modulus. Notice that any continuous eigenfunction provides  a factor map from $(X,T)$ to a rotation. 

In this work we are mainly concerned with continuous eigenvalues $\lambda = \exp (2i\pi \alpha )$ of minimal Cantor systems. 
Such $\alpha$ is call an {\em additive continuous eigenvalue} of $(X,T)$, and the set of  additive continuous eigenvalue, denoted $E(X,T)$,  is an additive subgroup of $\mathbb{R}$ called the {\em group of additive continous eigenvalues}.
It is well-known that $E (X,T)$ is  countable and contains $\ZZ$.

We say two dynamical systems $(X,T)$ and $(Y,S)$ are {\em orbit equivalent} (OE) whenever there exists a homeomorphism $\phi : X \to Y$ sending orbits to orbits: for all $x\in X$, 
$$
\phi \left(\{ T^n x \mid  n\in \mathbb{Z} \} \right) = \{ S^n \phi (x) \mid  n\in \mathbb{Z} \}.
$$

This induces the existence of maps $\alpha : X\to \mathbb{Z}$ and $\beta : X \to \mathbb{Z}$ satisfying:
for all $x\in X$,
$$
\phi \circ T (x) = S^{\alpha (x)}\circ \phi (x) \hbox{ and } \phi \circ T^{\beta (x)} (x) = S\circ \phi (x).
$$

When $\alpha$ and $\beta$ have both at most one point of discontinuity, we say $(X,T)$ and $(Y,S)$ are {\em strongly orbit equivalent} (SOE)\index{strongly orbit equivalent!SOE}\index{orbit equivalent!strongly}. 
We recall below the seminal result in \cite{Giordano&Putnam&Skau:1995} that characterized these equivalence in terms of dimension groups.

\begin{theorem}
\cite{Giordano&Putnam&Skau:1995}
\label{theo:GPS}
Let $(X,T)$ and $(Y,S)$ be two minimal Cantor dynamical systems.
The following are equivalent:
\begin{enumerate}
\item
$(X,T)$ and $(Y,S)$ are strong orbit equivalent.
\item
$K^0(X,T)$ and $K^(Y,S)$ are isomorphic as dimension groups with order units.
\end{enumerate}
The following are also equivalent:
\begin{enumerate}
\item
$(X,T)$ and $(Y,S)$ are orbit equivalent.
\item
$K^0(X,T)/{\rm Inf} (K^0 (X,T))$ and $K^(Y,S)/{\rm Inf} (K^0 (Y,S))$ are isomorphic as dimension groups with order units.
\end{enumerate}
\end{theorem}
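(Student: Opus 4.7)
The plan is to route everything through Bratteli--Vershik models, which provide the bridge between the purely dynamical side (orbits on a Cantor set) and the algebraic side (the ordered group $K^0(X,T)$).

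First I would invoke the Herman--Putnam--Skau representation theorem: every minimal Cantor system $(X,T)$ is conjugate to the Vershik map on a simple properly ordered Bratteli diagram $B=(V,E)$. The associated dimension group is the inductive limit of $\mathbb{Z}^{|V_n|}$ under the incidence matrices, together with its natural positive cone and the distinguished order unit coming from the root. A key lemma to verify here is that this inductive limit is naturally isomorphic, as an ordered group with unit, to $C(X,\mathbb{Z})/\partial_T C(X,\mathbb{Z})$, with positive cone generated by non-negative functions and unit $[\mathbf{1}_X]$; this is the concrete avatar of $K^0(X,T)$ we will manipulate.

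For the SOE part, the forward implication $(1)\Rightarrow(2)$ proceeds by pushing clopen partitions across the orbit map. If $\phi\colon X\to Y$ is the SOE homeomorphism and $\alpha,\beta$ are the orbit cocycles each having at most one point of discontinuity, then for any $f\in C(Y,\mathbb{Z})$ the function $f\circ\phi$ differs from an element of $C(X,\mathbb{Z})$ by a coboundary (the single discontinuity can be absorbed into $\partial_T$ because cocycles with a single jump are coboundaries in the coinvariant group). This furnishes a well-defined map $C(Y,\mathbb{Z})/\partial_S\to C(X,\mathbb{Z})/\partial_T$ that is an ordered group isomorphism preserving the unit. For the reverse $(2)\Rightarrow(1)$, given an isomorphism of dimension groups with units, I would use the Bratteli--Elliott--Krieger theorem that two simple ordered Bratteli diagrams yield isomorphic dimension groups (with units) if and only if, after telescoping, they admit a common ``intertwining'' diagram. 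Starting from Bratteli--Vershik models of $(X,T)$ and $(Y,S)$ and passing to a common intertwining, one can build an explicit homeomorphism $\phi\colon X\to Y$ level by level using the Kakutani--Rohlin partitions dual to the diagram. By construction $\phi$ sends orbits to orbits, and the cocycles are continuous away from the unique pair of points sitting at the top/bottom of every Kakutani--Rohlin tower, giving the single discontinuity required for SOE.

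For the OE statement, the same Bratteli--Vershik strategy applies but with one additional ingredient: the infinitesimal subgroup $\mathrm{Inf}(K^0(X,T))$ is exactly the kernel of the map $K^0(X,T)\to\mathrm{Aff}(\mathcal{M}(X,T))$, so two systems have isomorphic $K^0/\mathrm{Inf}$ as ordered groups with unit iff their invariant-measure simplices are affinely homeomorphic by a map sending the unit to the unit. On the diagram side, allowing arbitrary (rather than single-point) discontinuities in the orbit cocycles gives enough slack to quotient out infinitesimals, because one can then freely modify finitely many edges per level without disturbing the orbit equivalence. I expect the main obstacle in carrying this out rigorously to be the $(2)\Rightarrow(1)$ direction of the SOE part: producing the homeomorphism $\phi$ from an abstract isomorphism of dimension groups requires a careful compatibility at every telescoping level so that the resulting orbit map has exactly \emph{one} point of cocycle discontinuity on each side, which is precisely what the intertwining lemma for properly ordered simple Bratteli diagrams is designed to guarantee.
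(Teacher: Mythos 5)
First, note that the paper itself does not prove Theorem \ref{theo:GPS}: it is quoted from Giordano--Putnam--Skau as background, so your attempt can only be measured against their argument. For the strong orbit equivalence half, your outline does follow their route. Two remarks, though. In the direction (1)$\Rightarrow$(2), the issue is not that $f\circ\phi$ ``differs from an element of $C(X,\mathbb{Z})$ by a coboundary'' --- $f\circ\phi$ already lies in $C(X,\mathbb{Z})$ whenever $f\in C(Y,\mathbb{Z})$ and $\phi$ is a homeomorphism. What must be proved is that the pullback of an $S$-coboundary is a $T$-coboundary, i.e.\ that $x\mapsto (g\circ\phi)(T^{\beta(x)}x)-(g\circ\phi)(x)$ lies in $\partial_T C(X,\mathbb{Z})$; since $\beta$ may be discontinuous at one point, this rests on a genuine lemma of GPS (an element of $C(X,\mathbb{Z})$ which is a pointwise coboundary of an integer-valued function continuous off a single point is a continuous coboundary). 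Your parenthetical ``cocycles with a single jump are coboundaries in the coinvariant group'' is not that statement and is not meaningful as written. The direction (2)$\Rightarrow$(1) via Elliott intertwining of Bratteli diagrams, together with the fact that Vershik maps arising from two proper orderings of the same simple diagram have the same orbits off the maximal/minimal paths, is indeed the correct mechanism.

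The genuine gap is in the orbit equivalence half. Your key claim --- that $K^0(X,T)/\mathrm{Inf}(K^0(X,T))$ and $K^0(Y,S)/\mathrm{Inf}(K^0(Y,S))$ are isomorphic as ordered groups with unit if and only if $\mathcal{M}(X,T)$ and $\mathcal{M}(Y,S)$ are affinely homeomorphic by a unit-preserving map --- is false. Take $(X,T)$ and $(Y,S)$ to be Sturmian subshifts with irrational angles $\alpha$ and $\beta$ such that $1,\alpha,\beta$ are rationally independent (or a Sturmian subshift and the dyadic odometer). Both systems are uniquely ergodic, so both simplices are singletons and are trivially affinely homeomorphic with units matching; yet the quotients $K^0/\mathrm{Inf}$ are $\mathbb{Z}+\alpha\mathbb{Z}$ and $\mathbb{Z}+\beta\mathbb{Z}$, which are not isomorphic as ordered groups with unit (a unit-preserving order isomorphism must intertwine the unique normalized traces, hence act as the identity on these subgroups of $\mathbb{R}$, which are distinct). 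The missing datum is precisely the kind of object this paper studies: the affine homeomorphism of simplices must in addition carry the image of $K^0(X,T)$ inside $\mathrm{Aff}(\mathcal{M}(X,T))$ onto the image of $K^0(Y,S)$. Moreover, even after correcting the equivalence, your sentence about ``freely modifying finitely many edges per level'' is not an argument: it does not describe any construction that, from an isomorphism of the quotient groups, produces an orbit map. The (2)$\Rightarrow$(1) direction for orbit equivalence is the deepest part of the GPS paper and requires their absorption-type machinery (in effect, that every minimal Cantor system is orbit equivalent to a Bratteli--Vershik system realizing $K^0/\mathrm{Inf}$ as its dimension group); it cannot be obtained by merely relaxing the single-discontinuity requirement in the strong orbit equivalence construction.
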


\subsection{Partitions and towers}
\label{subsec:partandto}

Sequences of partitions associated to minimal Cantor
systems were used in \cite{Herman&Putnam&Skau:1992} to build representations
of such systems as adic transformations on ordered
Bratteli diagrams. Here we do not introduce the whole formalism of
Bratteli diagrams since we will only use the language describing
the tower structure, even if both languages are very close.
We recall some definitions and fix some notations.

For    a minimal Cantor system $(X,T)$, a {\it clopen Kakutani-Rohlin partition} (CKR partition) is a
partition $\P$ of $X$ given by
\begin{equation}
\label{eq:def-KR}
\P = \{ T^{j} B(k); \  1\leq k\leq C , \ 0 \leq j< h(k) \},
\end{equation}
where $C, h(1),\dots,h(k)$ are positive integers, and 
$B(1),\dots , B(C)$ are clopen subsets of $X$ such that 
$$ \bigcup_{k=1}^C T^{h(k)} B(k)=  \bigcup_{k=1}^C B(k).$$

The set $B =\bigcup_{1\leq k\leq C}B(k)$ is called  the {\it base} of $\mathcal{P}$.
Let
\begin{equation}
\label{eq:def-seq-KR}
\bigl\{
\P_n=
\{
T^{j}B_{n} (k); 1\leq k\leq C_n,\ 0\le j<h_{n}(k)
\}
\bigr\}_{ n\in\mathbb{N}}
\end{equation}
be a sequence of CKR partitions. For every $n\in \NN$, we denote
$B_n$ the base of $\P_n$.
To be coherent with the notations of  \cite{Bressaud&Durand&Maass:2005}, we assume that $\P_0$ is the
trivial partition, that is, $B_0=X$, $C_0=1$ and $h_{0} (1) = 1$, and for the partition $\P_{1}$, $h_{1}(k) =1$ for any integer $1 \le k \le C_{1}$. 

We say that  the sequence $\{\P_n\}_{n\in \NN}$ is \emph{nested} if it satisfies: for any integer
$n\in \NN$

\medskip

{\bf (KR1)} $B_{n+1} \subseteq B_n$;

\medskip

{\bf (KR2)} $\P_{n+1} \succeq \P_n$; i.e., for any $A\in \P_{n+1}$
there exists an atom  $A^{'}\in \P_n$ such that $A\subseteq A^{'}$;

\medskip

{\bf (KR3)} $ \bigcap_{n\in \NN} B_n $ consists of a unique point;

\medskip

{\bf (KR4)} the sequence of partitions spans the topology of $X$.

\medskip

In \cite{Herman&Putnam&Skau:1992} it is proven that given a minimal Cantor system
$(X,T)$, there exists a nested sequence of CKR partitions fulfilling
{\bf (KR1)}--{\bf (KR4)} with the following
additional technical conditions: for any integer $n\geq 0$,

\medskip

{\bf (KR5)}  for any $1\leq k \leq C_{n}$, $1\leq l\leq C_{n+1}$,
there exists an integer $0 \leq j < h_{n+1} (l)$ such that $T^{j} B_{n+1}(l)
\subseteq B_{n}(k)$;

\medskip

{\bf (KR6)}  $B_{n+1} \subseteq B_{n}(1)$.

\medskip

We associate to the sequence $\{\P_n\}_{n \in \NN}$,  
the sequence of matrices $\{M_n\}_{n\ge 1}$, where
$M_n = (m_n(l,k))_{ 1\leq l \leq C_n , 1\leq k \leq C_{n-1}}$ is
given by
$$
m_n(l,k) =
\#
\{
0 \leq j < h_{n}(l) ; \  T^{j} B_{n}(l) \subseteq B_{n-1}(k)
\}.
$$
Notice that {\bf (KR5)} is equivalent to:
for any $n\ge 1$, the matrix $M_n$ has  positive entries.

For $n\geq 0$, we set $H_n=(h_{n}(l) ; 1\leq l\leq C_n)^T$.
Since the sequence of partitions is nested, we have 
$H_n=M_nH_{n-1}$ for any $n\geq 1$. 
Notice also that, by the convention,  
\begin{align}
\label{h1}
M_{1}=H_1 = (1, \dots , 1)^T.
\end{align}

For $n>m\geq 0$, we define
\begin{align}\label{h2}
P_{n,m}=M_nM_{n-1}\dots M_{m+1}, P_{1} = M_{1}, \hbox{ and } P_{n+1}=P_{n+1,1}.
\end{align}

Clearly, we have the relations
\begin{align}\label{h3}
P_{n,m}(l,k)=\#\bigl\{ 0\leq j < h_n(l);\
T^{j}B_n(l)\subseteq B_m(k) \bigr \},
\end{align}

for $1 \leq l \leq C_n, \ 1 \leq k \leq C_m$, and
\begin{align}\label{h4}
P_{n+1,m}H_m= H_{n+1}=P_{n+1}H_1.
\end{align}

Along the paper, we will strongly use a technique that we call {\em telescoping}: That is, starting from a sequence of CKR partitions  $\{ \P_{n}\}_{n\in \NN}$ fulfilling  {\bf (KR1)}--{\bf (KR6)}, we will consider an infinite subsequence of partitions satisfying an additional property. Actually, it is plain to check that any such subsequence of CKR partition satisfies also {\bf (KR1)}--{\bf (KR6)}. Moreover, the sequences of the associated matrices of the type $\{M_{n}\}_{n\in \NN}$ and $\{P_{n,m}\}_{n>m \ge 0}$ are subsequences of the previous ones. 

\subsection{Dimension groups, traces, infinitesimals and rational subgroups}
\label{subsec:dimtraceserg}

\subsubsection{Dimension groups}
We recall here some basic definitions of the algebraic  notion of dimension groups arising from the $C^*$-alegbras. Relations with dynamical systems will be explain in the next subsection. Most of the notions arises from  \cite{Effros:1981}.  
 
By an {\it ordered group} we shall mean a countable abelian group $G$ together with a subset $G^+$, called the {\em positive cone}, such that $G^+-G^+ = G$, $G^+\cap (-G^+) = \{0\}$ and $G^+ + G^+ \subset G^+$.
We shall write $a\leq b$ if $b-a\in G^+$. 
We say that an ordered group is \emph{unperforated} if $a\in G$ and $na\in G^+$ for some $a\in G$ and $n\in\mathbb{N}$ implies that $a\in G^+$. 
Observe that an unperforated group is torsion free. 
We say $(G, G^+)$ is \emph{acyclic} whenever $G$ is not isomophic to $\mathbb{Z}$.
By an \emph{order unit} for $(G,G^+)$ we mean an element $u\in G^+$ such that for every $a\in G$, $a\leq nu$ for some $n\in \mathbb{N}$. 

\begin{definition}
A \emph{dimension group} $(G,G^+,u)$ with distinguished order unit $u$ is an unperforated ordered group $(G,G^+)$ satisfying the \emph{Riesz interpolation property}, i.e., given $a_1, a_2, b_1, b_2 \in G$ with $a_i\leq b_j$ ($i,j=1,2$),
there exists $c\in G$ with $a_i\leq  c \leq b_j$, $(i,j=1,2)$.
\end{definition}

We say that two dimension groups $(G_1,G_1^+,u_1)$ and $(G_2,G_2^+,u_2)$ are isomorphic whenever there exists an order isomorphism $\phi : G_1 \to G_2$, i.e., $\phi$ is a group isomorphism such that $\phi(G_1^+)=G_2^+$, and
$\phi(u_1)=u_2$. An {\em order ideal} is a subgroup $J$ such that $J= J^+-J^+$ (where $J^+ = J\cap G^+$) and $0\leq a\leq b \in J$ implies $a\in J$.
A dimension group $(G,G^+,u)$ is \emph{simple} if it contains no non-trivial order ideals. 
It is easily seen that $(G,G^+)$ is a simple dimension group if and only if every $a\in G^+\backslash \{0\}$ is an order unit. 
Moreover, an unperforated simple ordered group is acyclic if and only if it satisfies the Riesz interpolation property (see \cite{Effros&Handelman&Shen:1980}).
Thus, the dimension groups are all acyclic.

\subsubsection{Traces}

Let $(G,G^+,u)$ be a simple dimension group with distinguished order unit $u$. 
We say that a homomorphism $p : G \to \mathbb{R}$ is a {\em trace} (also called  a \emph{state}) if $p$ is non negative (i.e., $p(G^+)\geq 0$) and $p(u)=1$. We denote the
collection of all traces on $(G,G^+,u)$ by $S(G,G^+,u)$. 
Now $S(G,G^+,u)$ is a convex compact subset of the locally convex space $\mathbb{R}^G$ endowed with the product topology. In fact, one can show that $S(G, G^+ , u)$ is a Choquet simplex. 
It is a fact (see \cite{Herman&Putnam&Skau:1992}) that $S(G,G^+, u)$ determines the order on $G$. Actually,
$$
G^+ = \{ a\in G; \ p(a)>0, \forall p\in S(G, G^+,u)\} \cup \{0\} .
$$

As we will see later, the following group is fundamental in the study of continuous eigenvalues of minimal Cantor systems.

\begin{definition} 
Let $(G,G^+,u)$ be an ordered group with unit. 
We call {\it image subgroup } of $(G,G^+,u)$ the subgroup of $\RR$
given by

$$
I(G,G^+,u)=\bigcap_{\tau\in S(G,G^+,u)}\tau(G).
$$ 
\end{definition}

\subsubsection{Infinitesimals}
Let $(G,G^+)$ be a simple dimension group and let $u\in G^+\backslash\{0\}$. We say that an element $a\in G$ is \emph{infinitesimal} if $-\epsilon u \leq a\leq \epsilon u$ for all $0<\epsilon\in \mathbb{Q}^+$ (for
$\epsilon = \frac{p}{q},\, p,q\in \mathbb{N}$, then  $a\leq \epsilon u$ means that $qa \leq pu$). 

It is easy to see that the definition does not depend upon the particular order unit $u$.
An equivalent definition is: $a\in G$ is infinitesimal if $p(a)=0$ for all $p\in S(G,G^+,u)$. The collection of infinitesimal elements of $G$ forms a  subgroup, \emph{the infinitesimal subgroup of $G$}, which we denote by $\mathrm{Inf}(G)$.

Observe that the quotient group $G/\mathrm{Inf}(G)$ is also a simple dimension group for the induced order, and the infinitesimal subgroup of $G/\mathrm{Inf}(G)$ is trivial (see \cite{Herman&Putnam&Skau:1992}). 
Furthermore, an order unit for $G$ maps to an order unit for  $G/\mathrm{Inf}(G)$.
Moreover the traces space of $G$ and $G/{\rm Inf} (G)$ are isomorphic.

When  $S(G,G^+,u)$ consists of a unique trace,  notice that 
$G/{\rm Inf}(G)$ is isomorphic to $(I(G,G^+,u), I(G,G^+,u)\cap
\RR^+,1)$, as ordered groups with unit.

\subsubsection{Rational subgroups}

By a \emph{rational group} $H$ we shall mean a subgroup of $\mathbb{Q}$ that contains $\mathbb{Z}$. We say that $H$ is a \emph{cyclic} rational group if $H$ is isomorphic to $\mathbb{Z}$. Clearly $(H,H\cap\QQ^+,1)$
is a simple dimension group with distinguished order unit $1$.
For a  simple dimension group with order unit $(G,G^+,u)$, we define the {\em rational subgroup} of $G$, denoted $\mathbb{Q}(G,G^+,u)$ (or $\mathbb{Q}(G,u)$ for short), by
$$
\mathbb{Q}(G,u) = \{ m/n; \ n\in \mathbb{N}^*, m\in \mathbb{Z},   \exists g\in G, \ ng=mu \}.
$$

The notion of rational subgroup of a dimension group with distinguished order unit depends heavily upon the choice of the order unit. 

Notice that for $n, m \in \ZZ$ and  $g\in G$ such that $ng=mu$, one gets, for any trace $\tau$, $\tau (g) = m/n$.
Consequently, $\mathbb{Q}(G,u)$ is a subgroup of $I (G, G^+, u)$.

\subsection{Dynamical interpretation of dimension groups, traces, infinitesimals and rational subgroups}

We consider here $(X,T)$  a minimal Cantor dynamical system.

\subsubsection{``Dynamical'' dimension groups}
We denote by $C (X,\mathbb{Z})$ the set of continuous maps from $X$ to $\mathbb{Z}$.
Consider the map $\beta : C (X,\mathbb{Z}) \to C (X,\mathbb{Z})$ defined by 
$\beta f =f\circ T - f$ for all $f\in C (X,\mathbb{Z})$.
The images of $\beta$ are called {\em coboundaries}.
Let $H(X,T)$ be the quotient group $C(X,\mathbb{Z})/\beta C(X,\mathbb{Z})$. The class of  a function $f \in C(X, , \ZZ)$ in this quotient is denoted by $[f]$.
We call {\em order unit}  the class $[1]$ of the constant function equal to $1$.

The positive cone, $H^+(X,T,\mathbb{Z})$, is the  set of classes  of non-negative functions $C(X,\NN )$.
Finally, the triple 
$$
K^0 (X,T) = (H(X,T,\mathbb{Z}) , H^+(X,T,\mathbb{Z}),[1] ). 
$$

is an ordered group with order unit. 
It is moreover a dimension group and, which is less immediate, a converse also holds.

\begin{theorem}
\cite{Herman&Putnam&Skau:1992}
\label{theo:hps}
If $(X,T)$ is a minimal Cantor system, then $K^0 (X,T)$ is a simple dimension group. Furthermore, if $(G,G^+ , u)$ is a simple dimension group then there is a minimal Cantor system $(X,T)$ such that $K^0 (X,T)$ and $(G,G^+ , u)$ are isomorphic.
\end{theorem}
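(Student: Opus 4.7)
My plan is to use the nested sequence of clopen Kakutani–Rohlin partitions $\{\P_n\}_{n\in\NN}$ from Subsection \ref{subsec:partandto} to realize $K^0(X,T)$ as an inductive limit of simplicial ordered groups. For each $n$ and $k$, the telescoping identity
\[
1_{B_n(k)}-1_{T^jB_n(k)} = \beta\bigl(\sum_{i=1}^{j}1_{T^iB_n(k)}\bigr)
\]
shows that all levels of a single tower are cohomologous; combined with {\bf (KR4)} and compactness of $X$, this lets me approximate every $[f]\in H(X,T,\ZZ)$ by an integer combination of the classes $[1_{B_n(k)}]$, $1\le k\le C_n$. Decomposing $B_n(k)=\bigsqcup_{l,j}T^jB_{n+1}(l)$ along atoms of $\P_{n+1}$ and reducing modulo $\beta C(X,\ZZ)$ yields $[1_{B_n(k)}] = \sum_l m_{n+1}(l,k)\, [1_{B_{n+1}(l)}]$. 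Thus the natural maps $\iota_n:\ZZ^{C_n}\to H(X,T,\ZZ)$, $\iota_n(e_k)= [1_{B_n(k)}]$, are compatible with the transitions $M_{n+1}$ and exhibit $H(X,T,\ZZ)$ as $\varinjlim(\ZZ^{C_n},M_{n+1})$; by \eqref{h4} the order unit $[1]$ corresponds at each level to $H_n$.

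Granting this presentation, the dimension group axioms follow from formal properties of inductive limits of simplicial groups under positive maps. \emph{Unperforation}: if $n[f]\in H^+$, pick a level $m$ at which $f$ and a chosen non-negative representative of $[nf]$ are both constant on atoms of $\P_m$; the vector representing $n[f]$ in $\ZZ^{C_m}$ then lies in $\ZZ^{C_m}_+$, and componentwise division by $n$ places the representative of $[f]$ in $\ZZ^{C_m}_+$ as well. \emph{Riesz interpolation} is immediate in each lattice $\ZZ^{C_n}$ and passes to the limit by telescoping so that all interpolants live at a common level. For \emph{simplicity}, given $[f]\in H^+\setminus\{0\}$, choose $f\ge 0$ with $f\ge 1$ on a non-empty open set $U$; minimality provides $N$ with $X=\bigcup_{i=0}^N T^{-i}U$, whence $g:=\sum_{i=0}^{N}f\circ T^i\ge 1$ pointwise, so $(N+1)[f]=[g]\ge [1]$ and $[f]$ is an order unit.

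For the converse, I appeal to the Effros–Handelman–Shen theorem \cite{Effros&Handelman&Shen:1980}: any simple dimension group with order unit $(G,G^+,u)$ is itself an inductive limit of some system $(\ZZ^{C_n},M_{n+1})$ with $M_{n+1}$ integer matrices of strictly positive entries (after a telescoping) and $M_{n+1}H_n=H_{n+1}$ tracking the unit. Encoding this data as a Bratteli diagram with $C_n$ vertices at level $n$ and $m_{n+1}(l,k)$ edges between corresponding vertices, and equipping it with a Vershik order as in \cite{Herman&Putnam&Skau:1992}, produces a minimal Cantor system $(X,T)$ whose canonical nested CKR partitions have the prescribed matrices; the first part then identifies $K^0(X,T)$ with $(G,G^+,u)$.

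The main obstacle, as the authors themselves note, lies in the converse direction, and within it the Effros–Handelman–Shen theorem that expresses every simple dimension group as an inductive limit of simplicial groups; its proof uses Riesz interpolation to manufacture the approximating $\ZZ^{C_n}$ by hand. In the forward direction, the corresponding technical nuisance is verifying that no additional coboundary relations survive in the inductive limit beyond those captured by the products $M_{n+1}\cdots M_m$, a point secured by the strict positivity condition {\bf (KR5)} and by the freedom to telescope built into the setup.
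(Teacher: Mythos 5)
The paper does not prove this statement; it is quoted verbatim from \cite{Herman&Putnam&Skau:1992}, so there is no in-paper argument to compare against. Your outline is exactly the standard Herman--Putnam--Skau proof: the forward direction via the Kakutani--Rohlin/Bratteli--Vershik presentation of $H(X,T,\ZZ)$ as $\varinjlim(\ZZ^{C_n},M_{n+1})$ with unperforation, Riesz interpolation and simplicity checked at finite levels, and the converse via Effros--Handelman--Shen plus a properly ordered simple Bratteli diagram. The two points you flag as the real work are indeed the crux: in the forward direction one must prove that $[f]=0$ if and only if the vector of tower sums of $f$ vanishes at some level (the coboundary characterization, which needs an actual argument, not just (KR5) and telescoping), and in the converse one must check that the Vershik map extends to a minimal homeomorphism of a Cantor space, which uses the simplicity and acyclicity of $(G,G^+,u)$; as an outline deferring these to the cited reference, your proposal is correct.
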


\subsubsection{Traces are invariant measures}
Given any invariant probability measure $\mu$ of the system $(X,T)$, we associate a trace $\tau_{\mu}$ on $K^0(X,T)$ defined  by $ \tau_{\mu} ([f]) := \int f d\mu$ for any $f \in C(X,\ZZ)$. It is shown in \cite{Herman&Putnam&Skau:1992}, that the map $\mu \mapsto \tau_{\mu}$ is an affine isomorphism from  the space of $T$-invariant probability measures $\mathcal{M} (X,T)$ to the traces space $S(K^0 (X,T))$.

We denote by $I(X,T)$ the image subgroup $I(K^0(X,T))$. 
Rephrasing the definition of the image subgroup in dynamical terms, it is clear that 
$$
I(X,T) = \bigcap_{\mu\in \mathcal{M} (X,T)} \left\{ \int f d\mu; f\in C(X,\mathbb{Z}) \right\}.
$$ 


\subsubsection{Infinitesimals are functions with zero integral for all invariant measures}
\label{sec:infint}
We have seen that ${\rm Inf} (K^0 (X,T)) = \{ g\in K^0 (X,T); \tau (g) = 0 \hbox{ for all traces } \tau \}$.
Thus, due to the identification described before, we also have
$$
{\rm Inf} (K^0 (X,T)) = \left\{ [f] \in K^0 (X,T); \int f d\mu = 0 \hbox{ for all  } \mu \in \mathcal{M} (X,T) \right\} .
$$

Observe that if $(X,T)$ is uniquely ergodic, then
$K^0(X,T)/{\rm Inf}(K^0(X,T))$ is isomorphic to $(I(X,T), I(X,T)\cap
\RR^+,1)$, as ordered groups with unit.

\subsubsection{The rational subgroup is the group of rational continuous eigenvalues}
\label{sec:rationaleig}
For any  $m/n \in \mathbb{Q} (K^0 (X,T))$,  there exists a class $[f]$ such that $n[f] =m [1_X]$ and thus $\int f d\mu = m/n$ for any $T$-invariant measure $\mu$. Thus, we have the inclusion
$$ \QQ(K^0(X,T))\subset I(X,T) \cap \QQ.$$

The following theorem gives a clear dynamical interpretation of such elements.

\begin{theorem}
\cite{Giordano&Putnam&Skau:1995,Ormes:1997}
\label{theo:gpso}
Let $(X,T)$ be a minimal Cantor system and let $\mu$ be any $T$-invariant measure. Then, a rational $\frac pq$ is an additive continuous eigenvalue of $(X,T)$, i.e.,  belongs to $E(X,T)$, if and only if $\frac pq = \int f d\mu$ for some $f\in C(X,\mathbb{Z})$.
Or, equivalently, 
$$
E(X,T)\cap \mathbb{Q} =  \QQ(K^0(X,T)).
$$
\end{theorem}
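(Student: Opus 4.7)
The plan is to prove the clean reformulation $E(X,T) \cap \mathbb{Q} = \mathbb{Q}(K^0(X,T))$, since the integral statement follows from the inclusion $\mathbb{Q}(K^0(X,T)) \subseteq I(X,T)\cap\mathbb{Q}$ already established before the theorem. The fundamental bridge is the map $t \mapsto e^{2i\pi t}$, which converts $\mathbb{Z}$-valued coboundary relations in $K^0(X,T)$ into $\mathbb{S}^1$-valued multiplicative eigenfunction relations.

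For the inclusion $\mathbb{Q}(K^0(X,T)) \subseteq E(X,T)$, I take $m/n$ in the rational subgroup, so there exist $g \in C(X,\mathbb{Z})$ and $F \in C(X,\mathbb{Z})$ with $ng - m = F - F\circ T$. I then define the continuous function $h : X \to \mathbb{S}^1$ by $h(x) = \exp(2i\pi F(x)/n)$. A direct computation gives
\[
\frac{h(Tx)}{h(x)} = \exp\!\left(\tfrac{2i\pi}{n}(F\circ T - F)(x)\right) = \exp(-2i\pi g(x))\cdot \exp(2i\pi m/n) = \exp(2i\pi m/n),
\]
since $g(x) \in \mathbb{Z}$. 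Hence $h\circ T = e^{2i\pi m/n}\, h$ and $m/n \in E(X,T)$.

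For the reverse inclusion $E(X,T)\cap\mathbb{Q} \subseteq \mathbb{Q}(K^0(X,T))$, I take $p/q \in E(X,T)$ (with $q>0$) and a continuous eigenfunction $h$ with $h\circ T = e^{2i\pi p/q} h$. Minimality of $(X,T)$ is invoked twice: first, $|h|\circ T = |h|$ forces $|h|$ to be constant, so I normalize $|h|\equiv 1$; second, $(h^q)\circ T = e^{2i\pi p} h^q = h^q$ forces $h^q$ to be a constant $c \in \mathbb{S}^1$, and after dividing $h$ by a fixed $q$-th root of $c$ I may assume $h^q\equiv 1$. Thus $h(X)$ lies in the finite discrete set of $q$-th roots of unity. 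Since $X$ is totally disconnected and $h$ is continuous, each preimage $h^{-1}\{e^{2i\pi k/q}\}$ (for $k=0,\ldots,q-1$) is clopen, so the formula $F(x)=k$ on this set defines a continuous $F : X \to \{0,\ldots,q-1\}\subset \mathbb{Z}$. The eigenvalue relation translates into $F\circ T \equiv F + p \pmod q$ pointwise, hence $g \= (F + p - F\circ T)/q$ lies in $C(X,\mathbb{Z})$ and satisfies $qg - p = F - F\circ T$. Passing to $K^0(X,T)$ yields $q[g] = p[1]$, so $p/q \in \mathbb{Q}(K^0(X,T))$. Integrating this identity against any $\mu \in \mathcal{M}(X,T)$ gives $\int g\,d\mu = p/q$, which recovers the integral form of the theorem.

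The only delicate point is the normalization step in the reverse direction, where minimality of the Cantor system is essential to force $|h|$ and $h^q$ to be constant and thereby reduce $h$ to a finitely-valued cocycle. Once this reduction is achieved, the passage from the multiplicative cocycle $h$ to the additive $\mathbb{Z}$-valued cocycle $F$ relies only on the total disconnectedness of $X$, and the remaining verification that $qg - p$ is a coboundary is purely algebraic.
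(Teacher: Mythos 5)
Your argument for the equality $E(X,T)\cap\mathbb{Q}=\mathbb{Q}(K^0(X,T))$ is correct in both directions, and it is the standard one: the map $t\mapsto e^{2i\pi t}$ converts the coboundary relation $ng-m=F-F\circ T$ into the eigenfunction relation, and conversely minimality plus total disconnectedness lets you rigidify a rational eigenfunction into a $\mathbb{Z}/q\mathbb{Z}$-valued cocycle $F$ with $F\circ T\equiv F+p \pmod q$. The normalization steps ($|h|$ constant and nonzero, $h^q$ constant) are exactly where minimality is needed, and you use it correctly. The paper gives no proof of this cited theorem, so there is nothing to compare against there.

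The gap is in your first sentence, where you claim the integral formulation ``follows from $\mathbb{Q}(K^0(X,T))\subseteq I(X,T)\cap\mathbb{Q}$.'' That inclusion only yields the \emph{forward} implication: if $p/q\in E(X,T)$ then $p/q\in\mathbb{Q}(K^0(X,T))$ and hence $p/q=\int g\,d\mu$ for all $\mu$. The \emph{converse} implication of the integral form --- if $p/q=\int f\,d\mu$ for some $f\in C(X,\mathbb{Z})$ and some invariant $\mu$, then $p/q\in E(X,T)$ --- is not addressed by your argument, and it does not reduce to the algebraic form: it would require $\tau_\mu(K^0(X,T))\cap\mathbb{Q}\subseteq\mathbb{Q}(K^0(X,T))$, which fails in general. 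Indeed the paper's own example in Section \ref{sec:ratOE} (the uniquely ergodic system with $K^0\cong\mathbb{Z}\times\mathbb{Q}$, where $I(X,T)=\mathbb{Q}$ but $E(X,T)=\mathbb{Z}$) shows that a rational can be realized as $\int f\,d\mu$ without being an eigenvalue when the infinitesimal subgroup is nontrivial. So the integral formulation, read literally, is strictly stronger than the algebraic one and is in fact false in that generality; what you have actually proved is the second displayed identity $E(X,T)\cap\mathbb{Q}=\mathbb{Q}(K^0(X,T))$, which is the form the paper uses in its applications. You should either restrict your claim to that identity or state explicitly that the integral condition must be taken in the form ``$qf-p$ is a coboundary,'' not merely ``$\int f\,d\mu=p/q$.''
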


As observe in \cite{Giordano&Putnam&Skau:1995} (see also \cite{Ormes:1997}), this implies the following result.

\begin{corollary}
Let $(X,T)$ and $(Y,S)$ be two strong orbit equivalent minimal Cantor systems (i.e., $K^0 (X,T)$ is isomorphic to $K^0 (Y,S)$).
Then, $(X,T)$ and $(Y,S)$ share the same rational continuous eigenvalues,  that is
$$E (X,T)\cap \mathbb{Q} = E (Y,S)\cap \mathbb{Q}. $$
\end{corollary}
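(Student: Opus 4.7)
The corollary is essentially a direct consequence of two facts already stated in the excerpt, so the plan is short and the main task is to stitch them together cleanly.

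First, I would invoke Theorem \ref{theo:GPS}: since $(X,T)$ and $(Y,S)$ are strong orbit equivalent, there is an isomorphism $\phi : K^0(X,T) \to K^0(Y,S)$ of dimension groups with distinguished order units, so in particular $\phi([1_X]) = [1_Y]$. Second, I would invoke Theorem \ref{theo:gpso}, which identifies the rational part of the group of additive continuous eigenvalues with the rational subgroup of the dimension group:
\[
E(X,T) \cap \mathbb{Q} = \mathbb{Q}(K^0(X,T)) \quad \text{and} \quad E(Y,S) \cap \mathbb{Q} = \mathbb{Q}(K^0(Y,S)).
\]

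It then suffices to check that the rational subgroup is an invariant of a dimension group with distinguished order unit, i.e.\ that $\phi$ induces an equality $\mathbb{Q}(K^0(X,T)) = \mathbb{Q}(K^0(Y,S))$. This is immediate from the definition: if $m/n \in \mathbb{Q}(K^0(X,T))$ there exists $g \in K^0(X,T)$ with $ng = m[1_X]$; applying the group isomorphism $\phi$ and using $\phi([1_X]) = [1_Y]$ yields $n\phi(g) = m[1_Y]$, so $m/n \in \mathbb{Q}(K^0(Y,S))$. The reverse inclusion is obtained by the same argument applied to $\phi^{-1}$.

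Combining the two displays above gives $E(X,T) \cap \mathbb{Q} = E(Y,S) \cap \mathbb{Q}$, which is the claim. There is really no obstacle here: the content of the corollary is entirely in Theorem \ref{theo:GPS} and Theorem \ref{theo:gpso}, and the remaining verification is a one-line compatibility check between the isomorphism and the defining relation $ng = mu$.
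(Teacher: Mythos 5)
Your proposal is correct and is exactly the argument the paper intends: the paper gives no written proof, simply remarking that the corollary follows from Theorem \ref{theo:gpso} (together with the characterization of strong orbit equivalence in Theorem \ref{theo:GPS}), and your one-line check that $\mathbb{Q}(G,u)$ is preserved under an order-unit-preserving isomorphism is the only detail needed to complete it.
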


\section{Group of eigenvalues and image of traces}
\label{section:main}

In this section $(X,T)$ stands for a given minimal Cantor system.
We fix a sequence $\{\mathcal{P}_n\}_n$ of CKR partitions of $(X,T)$ satisfying ({\bf KR1})-({\bf KR6}) and \eqref{h1}.
We recall  such a partition always exists.
Once it is fixed, we freely use the notations of Section \ref{subsec:partandto}.

\subsection{Some necessary conditions to be an eigenvalue}\label{sec:necessarycond}

The following results are fundamental in our study of eigenvalues of minimal Cantor systems. We set also some notations.
\begin{lemma}
\cite[Theorem 3, Theorem 5]{Bressaud&Durand&Maass:2010}
\label{lemma:thelemma}
Let $\alpha \in E(X,T)$. 
Then, there exist an integer $m >1 $, a real vector $v_m$ and an integer vector $w_m$ 
such that
\begin{enumerate}
\item
$\alpha P_mH_1 = v_m + w_m$, and
\item $\sum_{n> m}\|P_{n,m} v_m\|_{\infty}<\infty$,
\end{enumerate}
where $\Vert \cdot \Vert_{\infty}$ denotes the supremum norm. 
\end{lemma}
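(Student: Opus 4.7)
The plan is to construct $v_m$ and $w_m$ from a continuous eigenfunction. Since $X$ is totally disconnected, a continuous eigenfunction $f$ with $|f|\equiv 1$ and $f\circ T = e^{2\pi i \alpha} f$ lifts to $\phi \in C(X,\RR)$ with $f = e^{2\pi i \phi}$. The eigenvalue relation becomes $\phi\circ T - \phi = \alpha + g$ with $g\in C(X,\ZZ)$ locally constant; iterating gives $\phi(T^n x) - \phi(x) = n\alpha + S_n g(x)$ where $S_n g = \sum_{j=0}^{n-1} g\circ T^j$. By (KR3) the bases $B_n$ shrink to a point, so by uniform continuity of $\phi$ the quantity $\eta_n := \sup\{|\phi(x)-\phi(y)| : x,y\in B_n\}$ tends to $0$.

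For each $k$, pick $x_m(k) \in B_m(k)$ and set
$$v_m(k) := \phi(T^{h_m(k)} x_m(k)) - \phi(x_m(k)), \qquad w_m(k) := \alpha h_m(k) - v_m(k).$$
Then $v_m(k) = h_m(k)\alpha + S_{h_m(k)} g(x_m(k))$, and the cocycle sum being an integer forces $w_m(k)\in\ZZ$; this establishes (1). Because both points involved in $v_m(k)$ lie in $B_m$, we also have $|v_m(k)| \le \eta_m$.

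For (2), the key step is to prove the identity
$$(P_{n,m} v_m)(l) = \phi(T^{h_n(l)} x_n(l)) - \phi(x_n(l)), \quad n>m,\ 1\le l\le C_n,$$
for chosen base points $x_n(l)\in B_n(l)$. Both sides are congruent to $\alpha h_n(l)$ modulo $\ZZ$, so their difference is an integer. Decomposing the orbit of $x_n(l)$ over $[0,h_n(l))$ into successive returns to $B_m$ at times $0=t_0<t_1<\ldots<t_s=h_n(l)$ with $T^{t_i}x_n(l)\in B_m(k_i)$ (so that $|\{i:k_i=k\}|=P_{n,m}(l,k)$) gives
$$\phi(T^{h_n(l)}x_n(l))-\phi(x_n(l)) = \sum_{i=0}^{s-1}\bigl[\phi(T^{t_{i+1}}x_n(l))-\phi(T^{t_i}x_n(l))\bigr],$$
while $(P_{n,m}v_m)(l) = \sum_i v_m(k_i)$; a termwise comparison bounds the discrepancy between the two sums by $2\bigl(\sum_kP_{n,m}(l,k)\bigr)\eta_m$. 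A judicious telescoping of $(\P_n)_n$ ensures $\sum_n\eta_n<\infty$ and forces this bound below $1$ at every level, so the integer discrepancy vanishes. The identity then yields $\|P_{n,m}v_m\|_\infty\le\eta_n$, whence $\sum_{n>m}\|P_{n,m}v_m\|_\infty\le\sum_n\eta_n<\infty$.

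The main obstacle is calibrating the telescoping so that the combinatorial row sums $\sum_kP_{m_{k+1},m_k}(l,k)$ do not outpace the decay of $\eta_{m_k}$. A cleaner alternative, avoiding this tension, is to pick the base points $x_m(k)$ coherently along a distinguished orbit (say through $\bigcap_n B_n$), arranging that consecutive contributions in the termwise comparison pair up and telescope, so only two boundary terms of size $O(\eta_m)$ survive rather than $s$ terms of this size.
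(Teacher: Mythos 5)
The paper offers no proof of this lemma --- it is imported verbatim from [Bressaud--Durand--Maass 2010, Theorems 3 and 5] --- so your self-contained argument is necessarily a different route, and its overall design is the right one. Part (1) is correct as written: lifting the eigenfunction to $\phi\in C(X,\RR)$ with $g:=\phi\circ T-\phi-\alpha\in C(X,\ZZ)$, setting $v_m(k)=\phi(T^{h_m(k)}x_m(k))-\phi(x_m(k))$ and $w_m=\alpha H_m-v_m=-\bigl(S_{h_m(k)}g(x_m(k))\bigr)_k$ does produce an integer vector $w_m$ with $\|v_m\|_\infty\le\eta_m$.

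The gap is in part (2). Your discrepancy bound $2\bigl(\sum_kP_{n,m}(l,k)\bigr)\eta_m$ carries the factor $s=\sum_kP_{n,m}(l,k)$, which is of order $h_n(l)/\min_kh_m(k)$ and tends to infinity with $n$ while $m$ stays fixed; no telescoping of $\{\P_n\}_n$ can force $2s\eta_m<1$ for all $n>m$, since telescoping shrinks $\eta$ at later levels but the comparison is always against the fixed level $m$. The ``coherent base points'' alternative does not repair this: $\sum_iv_m(k_i)$ has no telescoping structure, because $v_m(k_i)$ depends only on the symbol $k_i$, which repeats arbitrarily, so the near-matching $\phi(T^{h_m(k_i)}x_m(k_i))\approx\phi(x_m(k_{i+1}))$ still leaves $s-1$ errors of size $\eta_m$. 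The correct repair sits inside your own computation: apply the integrality observation termwise instead of to the whole sum. With $y_i=T^{t_i}x_n(l)\in B_m(k_i)$, both $\phi(T^{t_{i+1}}x_n(l))-\phi(y_i)=h_m(k_i)\alpha+S_{h_m(k_i)}g(y_i)$ and $v_m(k_i)=h_m(k_i)\alpha+S_{h_m(k_i)}g(x_m(k_i))$ are congruent to $h_m(k_i)\alpha$ modulo $\ZZ$, so each individual difference is an integer of modulus at most $2\eta_m$; choosing $m$ with $\eta_m<1/2$ annihilates every term separately, and the identity $(P_{n,m}v_m)(l)=\phi(T^{h_n(l)}x_n(l))-\phi(x_n(l))$ then holds exactly for all $n>m$, with no dependence on $s$. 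This yields $\|P_{n,m}v_m\|_\infty\le\eta_n$, after which summability still requires one telescoping to arrange $\sum_n\eta_n<\infty$ --- acceptable given the paper's conventions, but it should be said explicitly, since for the original sequence of partitions $\eta_n$ is only guaranteed to tend to $0$.
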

For any given  $T$-invariant probability measure $\mu$ of $(X,T)$ we set 
$$
\mu_n = (\mu (B_n(k))^T_{1\leq k \leq C_n} 
$$
and we call it the {\em measure vector} of $(X,T)$.
It is easy to check it fullfills, for all $1\leq m<n$, the following identities:
\begin{align}
\label{eq:vectormeasure}
\mu_1^T H_1= 1 \hbox{ and }
\mu_m^T = \mu_n^T P_{n,m} .
\end{align}

\begin{lemma}\label{lemma:orthogonalite}\cite{Bressaud&Durand&Maass:2010}
With the conditions and the notations of Lemma \ref{lemma:thelemma}, for any $T$-invariant probability measure $\mu$ of $(X,T)$, for  any integer $n\ge m$,
$$\alpha =  \mu_{m}^T  w_{m}  \hspace{0.5cm} \textrm{ and } \hspace{0.5cm} 0 =  \mu_{n}^T P_{n,m} v_{m}.$$
\end{lemma}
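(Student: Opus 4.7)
The plan is to combine the two parts of Lemma \ref{lemma:thelemma} with the standard identities on the measure vectors $\mu_n$ to get both equalities simultaneously. First I would observe that since the atoms $T^{j}B_{m}(k)$ (for $1\le k\le C_{m}$, $0\le j<h_{m}(k)$) form a partition of $X$ and $\mu$ is $T$\nobreakdash-invariant, one has
$$
\mu_{m}^{T}H_{m}=\sum_{k=1}^{C_{m}}h_{m}(k)\mu(B_{m}(k))=\mu(X)=1.
$$
Combined with $P_{m}H_{1}=H_{m}$ (which follows from \eqref{h4}) and $\alpha P_{m}H_{1}=v_{m}+w_{m}$ from Lemma \ref{lemma:thelemma}(1), multiplying on the left by $\mu_{m}^{T}$ yields
$$
\alpha=\alpha\mu_{m}^{T}H_{m}=\mu_{m}^{T}v_{m}+\mu_{m}^{T}w_{m}.
$$
So the whole statement reduces to proving $\mu_{m}^{T}v_{m}=0$.

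Next I would exploit the telescoping identity $\mu_{m}^{T}=\mu_{n}^{T}P_{n,m}$ from \eqref{eq:vectormeasure}, which is valid for every $n\ge m$ (with the convention that $P_{m,m}$ is the identity). This gives
$$
\mu_{m}^{T}v_{m}=\mu_{n}^{T}P_{n,m}v_{m}\qquad\text{for every }n\ge m,
$$
so the quantity $\mu_{n}^{T}P_{n,m}v_{m}$ is constant in $n$. To conclude it is $0$, I would estimate it crudely: since $\|\mu_{n}\|_{1}=\sum_{k}\mu(B_{n}(k))=\mu(B_{n})\le 1$, one gets
$$
\bigl|\mu_{n}^{T}P_{n,m}v_{m}\bigr|\le \|\mu_{n}\|_{1}\cdot\|P_{n,m}v_{m}\|_{\infty}\le\|P_{n,m}v_{m}\|_{\infty}.
$$
By Lemma \ref{lemma:thelemma}(2), the series $\sum_{n>m}\|P_{n,m}v_{m}\|_{\infty}$ converges, so $\|P_{n,m}v_{m}\|_{\infty}\to 0$ as $n\to\infty$. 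Since the left-hand side does not depend on $n$, it must vanish, which simultaneously establishes $\mu_{n}^{T}P_{n,m}v_{m}=0$ for all $n\ge m$ and, plugged back into the displayed formula above, gives $\alpha=\mu_{m}^{T}w_{m}$.

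There is no real obstacle here; the only thing to be careful about is checking the dimensions and conventions, in particular that $P_{m}H_{1}=H_{m}$ and that the tower identity $\mu_{m}^{T}=\mu_{n}^{T}P_{n,m}$ holds, so that the quantity $\mu_{n}^{T}P_{n,m}v_{m}$ is genuinely $n$\nobreakdash-independent and can therefore be evaluated by letting $n\to\infty$.
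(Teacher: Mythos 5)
Your proposal is correct and follows essentially the same route as the paper: multiply the decomposition $\alpha P_mH_1=v_m+w_m$ by $\mu_m^T$, use the normalization $\mu_m^TH_m=1$ and the tower identity $\mu_m^T=\mu_n^TP_{n,m}$ to see that $\mu_n^TP_{n,m}v_m$ is independent of $n$, and then let $n\to\infty$ using the summability of $\|P_{n,m}v_m\|_\infty$ to conclude it vanishes. The only cosmetic difference is that the paper normalizes via $\alpha\mu_1^TH_1$ rather than $\alpha\mu_m^TH_m$, which amounts to the same computation.
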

\begin{proof}
For any integer $n> m>1$, we set $v_n=P_{n,m}v_m$ and $w_n=P_{n,m}w_m$.
Observe that Relation \eqref{eq:vectormeasure} and  Lemma \ref{lemma:thelemma} imply for any integer $n> m$, 
$$
\alpha
=
\alpha\mu_1^T H_1=\mu_m^TP_mH_1\alpha
=
\mu^T_m w_m+\mu^T_mv_m
=
\mu_m^T w_m+\mu_n^TP_{n,m}v_m .
$$
Since $\alpha-\mu_m^Tw_m$ does not depend on $n$ and $\lim_{n\to\infty} \Vert \mu_n^TP_{n,m}v_m \Vert_{\infty }=0$, we deduce that $\mu_m^Tv_m = \mu_n^TP_{n,m}v_m=0$, for every $n> m$.
\end{proof}
The following proposition and lemma will provide key arguments in the proof of our main result Theorem \ref{theo:main}.
For its proof we need to introduce some crucial quantities as it can be seen in the series of papers \cite{Cortez&Durand&Host&Maass:2003, Bressaud&Durand&Maass:2005, Bressaud&Durand&Maass:2010} and \cite{Durand&Frank&Maass:2014}.

For any  integer $n\in \mathbb{N}$, we define the {\em  entrance time}  $r_n(x)$ of  a point $x\in X$ to the base  $B_n $ by $r_n(x)=\min\{ j\geq 0; T^jx \in B_n  \}$.
The {\it suffix map of order $n$} is the map  $s_n:X\to \mathbb{N}^{C_n}$ given by
$$
(s_n(x))_k=\sharp
\{ j\in \NN; \ 0 \le j < r_{n+1}(x),   T^j x \in B_{n} (k) \}
$$ 
for every  $k\in \{1, \ldots,  C_n\}$.
A classical computation gives\footnote{Observe that by the conventions on $\P_{0}$ and $\P_{1}$, we have  $s_{0}(x) = 0$ for any $x\in X$.} (see  for example \cite{Bressaud&Durand&Maass:2005})
\begin{align}
\label{e:formulareturn}
r_n(x)= \sum_{k=1}^{n-1}
\langle s_k(x),P_kH_1 \rangle,
\end{align}
where $\langle v, v' \rangle = v^Tv'$ stands for the usual scalar product.
\begin{proposition}
\cite{Bressaud&Durand&Maass:2005}
\label{prop:condcont} Let $(X,T)$ be a minimal Cantor system and let  $\alpha \in \mathbb{R}$. 
The following conditions are equivalent,

\begin{enumerate}
\item
$\alpha$ belongs to $E(X,T)$;
\item
the sequence of functions $(\exp (2i\pi \alpha r_n(\cdot)))_n$ converges uniformly.
\end{enumerate}
\end{proposition}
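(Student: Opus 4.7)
The plan is to prove both implications by exploiting the functional equation of an eigenfunction together with the cocycle identity relating $r_n(Tx)$ and $r_n(x)$.

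For the direction $(1)\Rightarrow (2)$, I pick a continuous eigenfunction $f$ associated to $\alpha$, normalized so that $|f|\equiv 1$ (which is possible by minimality, since $|f|\circ T=|f|$ implies $|f|$ constant on a minimal system), so that $f\circ T^j = \lambda^j f$ for $\lambda = \exp(2i\pi \alpha)$. The identity
\[
\exp(2i\pi\alpha r_n(x)) \;=\; f(T^{r_n(x)} x)\,\overline{f(x)}
\]
reduces the uniform convergence of the left-hand side to the question of whether $f(T^{r_n(x)} x)$ converges uniformly in $x$. By the very definition of $r_n$, $T^{r_n(x)} x \in B_n$; and from (KR3) and (KR4), the nested decreasing sequence of clopen sets $(B_n)$ shrinks to a single point $x_0$, with its diameter tending to $0$ by compactness. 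Hence uniform continuity of $f$ gives $f(T^{r_n(x)} x) \to f(x_0)$ uniformly in $x$, so $\exp(2i\pi\alpha r_n(\cdot))$ converges uniformly to the continuous function $x \mapsto f(x_0)\,\overline{f(x)}$.

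For the converse $(2)\Rightarrow (1)$, I first observe that $r_n$ is constant on each atom of $\P_n$ (on $T^j B_n(l)$ it equals $h_n(l)-j$ for $1\le j< h_n(l)$ and $0$ for $j=0$), so $f_n := \exp(2i\pi\alpha r_n(\cdot))$ is continuous and its uniform limit $g$ is continuous with $|g|\equiv 1$. The key cocycle identity, immediate from the definition of $r_n$, is
\[
r_n(Tx) = r_n(x)-1 \quad \text{whenever } x\notin B_n,
\]
which yields $f_n(Tx) = \lambda^{-1} f_n(x)$ outside $B_n$. For any $x\neq x_0$, this applies for all sufficiently large $n$; passing to the limit gives $g\circ T = \lambda^{-1} g$ on $X\setminus\{x_0\}$, and the identity extends to all of $X$ by continuity of $g$ and $T$. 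Then $\overline{g}$ is a continuous, nowhere-vanishing function satisfying $\overline{g}\circ T = \lambda\,\overline{g}$, so $\alpha\in E(X,T)$.

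The main technical ingredient, used in the forward direction, is that the diameters of the bases $B_n$ tend to zero. This is not among (KR1)--(KR6) explicitly, but follows from the fact that $\bigcap_n B_n$ is a single point (KR3), that the partitions generate the topology of $X$ (KR4), and from compactness of $X$: for any open neighborhood $U$ of $x_0$, the closed sets $B_n\setminus U$ are nested with empty intersection, hence eventually empty. Once this is in place, both implications reduce to continuity arguments, and the exceptional role of the point $x_0$ in the converse is harmless by the continuity of $g$.
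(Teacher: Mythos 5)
Your proof is correct, and it is essentially the standard argument for this result: the paper itself gives no proof (it cites Bressaud--Durand--Maass), and the cited proof likewise uses the identity $\exp(2i\pi\alpha r_n(x))=f(T^{r_n(x)}x)\overline{f(x)}$ together with the shrinking of the bases $B_n$ for one direction, and the locally constant functions $\exp(2i\pi\alpha r_n(\cdot))$ with the relation $r_n(Tx)=r_n(x)-1$ off $B_n$ to build the eigenfunction for the converse. Your justification that $\operatorname{diam}(B_n)\to 0$ from (KR3) and compactness, and the extension of $g\circ T=\lambda^{-1}g$ across the single point $x_0$ by density, are exactly the points that need care, and you handle both correctly.
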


\begin{proposition}
\label{prop:tendtozero} 
Let $(X,T)$ be a Cantor minimal system.
Let $\lambda=\exp(2i\pi\alpha)$ be a continuous
eigenvalue of $(X,T)$. 
Then, 
$$
\max_{x\in X} |\langle  s_n(x) ,||| \alpha P_n H_1||| \rangle | \to_{n\to +\infty} 0,
$$
where $||| \cdot |||$ denotes the distance to the closest integer vectors.  
\end{proposition}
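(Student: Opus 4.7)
The plan is to combine the decomposition from Lemma \ref{lemma:thelemma} with the uniform convergence characterization of Proposition \ref{prop:condcont}, using a continuous eigenfunction to supply the finer uniform control that is required.

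First I apply Lemma \ref{lemma:thelemma} to $\alpha$, obtaining $m>1$ and vectors $v_n := P_{n,m} v_m$, $w_n := P_{n,m} w_m \in \ZZ^{C_n}$ with $\alpha P_n H_1 = v_n + w_n$ and $\sum_{n>m}\|v_n\|_\infty<\infty$; in particular $\|v_n\|_\infty \to 0$. For $n$ large enough that $\|v_n\|_\infty<1/2$, the integer vector $w_n$ is the coordinatewise closest integer to $\alpha P_n H_1$, so $|||\alpha(P_nH_1)_k||| = |(v_n)_k|$ and the claim reduces to $\max_{x\in X}\sum_k (s_n(x))_k\,|(v_n)_k| \to 0$. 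For any such $x$, writing $l=l_n(x)$ for the index with $T^{r_{n+1}(x)}x\in B_{n+1}(l)$, the orbit $x,Tx,\dots,T^{r_{n+1}(x)-1}x$ traces a terminal sub-portion of the tower over $B_{n+1}(l)$, so $(s_n(x))_k\leq P_{n+1,n}(l,k)=M_{n+1}(l,k)$, with essential equality attained at $x=Ty$ for $y\in B_{n+1}(l)$ (the one missed base visit, at level $0$, sits in $B_n(1)$ by \textbf{(KR6)} and costs at most $|(v_n)_1|=O(\|v_n\|_\infty)$). Consequently
$$
\max_{x\in X}\sum_k (s_n(x))_k\,|(v_n)_k| \;=\; \|M_{n+1}\,|v_n|\|_\infty + O(\|v_n\|_\infty),
$$
and it remains to prove $\|M_{n+1}\,|v_n|\|_\infty \to 0$.

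The main obstacle is that $\|M_{n+1} v_n\|_\infty = \|v_{n+1}\|_\infty\to 0$ (a direct consequence of Lemma \ref{lemma:thelemma}) only controls a signed combination, which allows cancellations between positive and negative components of $v_n$, whereas the target $\|M_{n+1}\,|v_n|\|_\infty$ does not. To preclude such cancellations I invoke Proposition \ref{prop:condcont} and fix a continuous eigenfunction $f$ of modulus one. By \textbf{(KR4)} and uniform continuity of $f$, there exists $\eta_n\to 0$ bounding the oscillation of $f$ on every atom of $\P_n$. For $y\in B_{n+1}(l)$ with base-visit levels $0=i_0<i_1<\dots<i_{J-1}<h_{n+1}(l)$ and $T^{i_j}y\in B_n(k_j)$, the values $f(T^{i_j}y)=\lambda^{i_j}f(y)$ all lie in $f(B_n)$, of diameter $\leq\eta_n$. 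This forces the partial sums $\alpha i_j\bmod 1 = \sum_{j'<j}v_n(k_{j'})\bmod 1$ into a common arc of length $O(\eta_n)$; because $\|v_n\|_\infty\to 0$ precludes crossings between integer cells, each signed partial sum in fact stays within $O(\eta_n)$ of $0$ in absolute value.

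The delicate final step, which I expect to carry the real content of the argument, is to upgrade this uniform control on every signed partial sum into a uniform bound on the total variation $\sum_j|v_n(k_j)|=(M_{n+1}|v_n|)_l$. For this I would further exploit the continuity of $f$ at the non-base intermediate levels of the tower over $B_{n+1}(l)$, which pins down the visit pattern $(k_0,\dots,k_{J-1})$ rigidly enough to rule out the sign-alternating configurations of $(v_n(k_j))_j$ that would otherwise keep $\sum_j|v_n(k_j)|$ bounded below. Combined with the reduction of the preceding paragraph, this yields the claimed uniform convergence $\max_{x\in X}|\langle s_n(x),|||\alpha P_nH_1|||\rangle|\to 0$.
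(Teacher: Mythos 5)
Your reduction via Lemma \ref{lemma:thelemma} is correct, and the partial-sum computation with the eigenfunction is sound as far as it goes: writing $S_j=\sum_{j'<j}v_n(k_{j'})$, the relation $f(T^{i_j}y)=\lambda^{i_j}f(y)$ forces all the $S_j$ to lie, modulo $\ZZ$, in an arc whose length is the diameter of $f(B_n)$ (which tends to $0$ because $B_n$ shrinks to a point by {\bf (KR3)} --- not, strictly, because of the oscillation of $f$ on single atoms, since $B_n$ is a union of $C_n$ atoms), and since the increments are bounded by $\|v_n\|_\infty\to 0$ the $S_j$ stay near $0$ in $\RR$. The problem is the last step. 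Passing from ``every signed partial sum $S_j$ is uniformly small'' to ``$\sum_j|v_n(k_j)|=(M_{n+1}|v_n|)_l$ is small'' is precisely where your text stops being a proof: you say you \emph{expect} this to carry the content and \emph{would} exploit continuity to exclude sign-alternating visit patterns, but no argument is given, and none of the ingredients you have assembled supplies one. If $v_n(k_j)=(-1)^j\delta$ with $\delta\leq\|v_n\|_\infty$, every partial sum is at most $\delta$ while the total variation is $J\delta$, and nothing you have established bounds $J\delta$. So there is a genuine gap at the decisive step.

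The gap is created by reading $|||\alpha P_nH_1|||$ as the componentwise nonnegative distance $|v_n|$ and therefore aiming at the unsigned sum $\sum_k(s_n(x))_k|v_n(k)|$. What the paper actually proves --- and what is used later, in inequality \eqref{eq:prop10} of Lemma \ref{the lemma} --- is the signed statement $\max_{x}|\langle s_n(x),v_n\rangle|\to 0$. Its proof runs differently: Proposition \ref{prop:condcont} gives $|||\langle s_n(x),v_n\rangle|||\to 0$ uniformly, one writes $\langle s_n(x),v_n\rangle=\epsilon_n(x)+E_n(x)$ with $E_n(x)\in\ZZ$ and $|\epsilon_n(x)|<1/8$, and one shows $E_n\equiv 0$ because $\{E_n=0\}$ is a nonempty (it contains $\bigcap_nB_n$, where $s_n$ vanishes) closed invariant set --- $E_n$ can only change by less than $1/2$ along an orbit when $\|v_n\|_\infty<1/8$ --- hence equals $X$ by minimality. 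That invariance-plus-minimality mechanism, which converts ``small mod $\ZZ$'' into ``small'', is the idea missing from your proposal. Note, though, that your own computation already yields the signed version by a different route: for $x=T^iy$ with $y\in B_{n+1}(l)$ one has $\langle s_n(x),v_n\rangle=S_J-S_{j_0}$, with $|S_J|=|v_{n+1}(l)|\leq\|v_{n+1}\|_\infty$ and $|S_{j_0}|$ controlled by the diameter of $f(B_n)$; had you stopped there, you would have a complete (and genuinely different) proof of the statement in the form in which the paper uses it.
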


\begin{proof}
The sequence $(|||\alpha r_n |||)$ is a uniform Cauchy sequence (Proposition \ref{prop:condcont}) and $\alpha (r_{n+1} - r_n ) =   \langle s_n , \alpha P_n H_1 \rangle $. Therefore   $ ||| \langle s_n , \alpha P_n H_1 \rangle |||$   converges to zero.
Using the notation in the proof Lemma \ref{lemma:orthogonalite} we have $\alpha P_n H_1 = v_n + w_n$.
Consequently, there exists $n_0$ such that for all $n\geq n_0$ and all $x$ we get 

$$
||v_n || < \epsilon  < 1/8 \hbox{ and } ||| \langle s_n (x) , v_n \rangle ||| < \epsilon .
$$

We may write $\langle s_n (x) , v_n \rangle = \epsilon_n (x) + E_n(x)$ with $|\epsilon (x)| < \epsilon$ and $E_n(x)$ an integer vector. 
Notice that $(\epsilon_n)_n$ converges uniformly to $0$.

Consider the set $A = \{ x \in X ; E_n(x) = 0 \}$.
Observe that $\bigcap_{n}B_n$ is contained in $A$ so it is non empty. It is not difficult to check that  $A$ is closed. 
Let us check it is $T$-invariant. 
We fix some $x \in A$. It is straightforward to verify that there are only three possible cases for $s_n(T(x))$ : $s_n (Tx)=s_n(x)$, $s_n (T(x)) = 0$ and $s_n (T(x))= s_n (x)-e$ for some vector $e$ from the canonical base. 
The first two cases are easy to handle.
For the last one, consider

\begin{align*}
|E(Tx ) - E (x)| = & |\epsilon (x) -\epsilon (Tx) + \langle s_n (T x) ,  v_n \rangle - \langle s_n (x) ,  v_n \rangle |\\
 \leq  & \frac 14 + |\langle e ,  v_n \rangle | \leq \frac 14 +  ||v_n|| \leq \frac 12 .
\end{align*}

Therefore $E(Tx) = E(x) = 0$.
By minimality we obtain that $A=X$ which implies that 
$$
\langle  s_n(x) ,||| \alpha P_n H_1||| \rangle  = \langle  s_n(x) ,v_n \rangle = \epsilon_n (x) .
$$
This achieves the proof.
\end{proof}

\begin{lemma}
\label{the lemma}
Let $(X,T)$ be a minimal Cantor system. Then, for any  $k\in \mathbb{Z}^{*}$ and $\alpha \in \mathbb{R}$ such that $k\alpha $ belongs to $E(X,T)$ and $\alpha$ does not,  there exist an integer  $m>1$, a real vector $v_m$ and an integer vector $w_m$ 
such that 
\begin{enumerate}
\item
$k\alpha P_m H_1 = w_m + v_m$,
\item $\sum_{n> m}\| P_{n,m}v_m\|_{\infty}<\infty$,
\item for every  measure $\mu \in \M(X,T)$ and integer $ n\ge m$, $\langle  \mu_{n},  P_{n,m} v_{m} \rangle =0$.  
\item the vector $\frac 1k P_{n,m} w_{m}$ is not an integer vector for infinitely many integers $n >m$.
\end{enumerate}
\end{lemma}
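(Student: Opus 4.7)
Assertions (1), (2) and (3) are direct applications of Lemma~\ref{lemma:thelemma} and Lemma~\ref{lemma:orthogonalite} to $k\alpha \in E(X,T)$: they produce an integer $m > 1$, an integer vector $w_m$ and a real vector $v_m$ satisfying $k\alpha P_m H_1 = w_m + v_m$, $\sum_{n>m}\|P_{n,m} v_m\|_\infty < \infty$, and $\langle \mu_n, P_{n,m} v_m\rangle = 0$ for every $\mu \in \mathcal{M}(X,T)$ and every integer $n \geq m$. The new content lies in assertion~(4), which I would prove by contradiction.

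Assume that the set of integers $n > m$ for which $\frac{1}{k} P_{n,m} w_m$ is \emph{not} an integer vector is finite, and choose $m' > m$ strictly beyond every exceptional index, so that $\tilde w := \frac{1}{k}P_{m',m}w_m$ is an integer vector. Setting $\tilde v := \frac{1}{k} P_{m',m} v_m$ and multiplying the identity of~(1) by $P_{m',m}$ before dividing by $k$ yields $\alpha P_{m'} H_1 = \tilde w + \tilde v$, with $\sum_{n > m'}\|P_{n,m'}\tilde v\|_\infty < \infty$ and $\langle \mu_n, P_{n,m'}\tilde v\rangle = 0$ for every $\mu \in \mathcal{M}(X,T)$ and $n \geq m'$, each inherited from~(2) and~(3) after dividing by $k$.

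Since $\langle s_n(x), P_{n,m'}\tilde w\rangle \in \mathbb{Z}$ for every $x$, the relation $\alpha(r_{n+1}(x) - r_n(x)) = \langle s_n(x), \alpha P_n H_1\rangle$ reduces modulo $\mathbb{Z}$ to $\langle s_n(x), P_{n,m'}\tilde v\rangle$. By Proposition~\ref{prop:condcont}, $\alpha \in E(X,T)$ would therefore follow from uniform summability over $n$ of $\sup_x |\langle s_n(x), P_{n,m'}\tilde v\rangle|$~--- precisely the sufficient direction of the eigenvalue characterization of Bressaud--Durand--Maass \cite{Bressaud&Durand&Maass:2010}, i.e., the converse of Lemma~\ref{lemma:thelemma} under the orthogonality of Lemma~\ref{lemma:orthogonalite}. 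This contradicts the hypothesis $\alpha \notin E(X,T)$, proving~(4).

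\textbf{Main obstacle.} The delicate step is this last one. The coordinates of $s_n(x)$ can be arbitrarily large, so summability of $\|P_{n,m'}\tilde v\|_\infty$ does not by itself control $|\langle s_n(x), P_{n,m'}\tilde v\rangle|$; the orthogonality with every invariant measure vector is indispensable. Combined with a Birkhoff-type approximation of the normalized vector $s_n(x)/r_{n+1}(x)$ by convex combinations of $\{\mu_n : \mu \in \mathcal{M}(X,T)\}$ in the Choquet simplex of invariant measures, it reduces the inner product to a small residual term, producing the required uniform summable estimate.
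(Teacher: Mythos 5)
Your handling of items (1)--(3) matches the paper exactly, and your opening reduction for (4) --- pass to an $m'$ beyond all exceptional indices so that $\tilde w=\tfrac1k P_{m',m}w_m$ is an integer vector --- is the same first step the paper takes. The gap is in how you then conclude $\alpha\in E(X,T)$. You appeal to a ``sufficient direction'' of Lemma \ref{lemma:thelemma}: that $\alpha P_{m'}H_1=\tilde w+\tilde v$ with $\tilde w$ integral, $\sum_{n}\|P_{n,m'}\tilde v\|_\infty<\infty$ and $\langle\mu_n,P_{n,m'}\tilde v\rangle=0$ should force $\alpha\in E(X,T)$. No such converse is stated in this paper, and it is not available for general minimal Cantor systems: the sufficiency results of Bressaud--Durand--Maass require finite rank (or explicit control of $\|M_{n+1}\|$) precisely because the coordinates of $s_n(x)$ are unbounded, as you yourself note. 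Your proposed repair --- approximating $s_n(x)/r_{n+1}(x)$ by convex combinations of the measure vectors and invoking orthogonality --- yields at best that $\langle s_n(x),P_{n,m'}\tilde v\rangle$ is $o\bigl(\|s_n(x)\|_1\,\|P_{n,m'}\tilde v\|_\infty\bigr)$ with no rate, which is nowhere near the uniform summability of $\sup_x|\langle s_n(x),P_{n,m'}\tilde v\rangle|$ that your argument requires. As stated, the key step does not go through.

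The paper closes this gap by a different mechanism that your proof never uses: it exploits the hypothesis $k\alpha\in E(X,T)$ a second time. Since $\langle s_i(x),P_{i,m}w_m\rangle\in\mathbb{Z}$, Proposition \ref{prop:condcont} applied to $k\alpha$ shows that the partial sums $\sum_{i=m+1}^{n}\langle s_i(x),P_{i,m}v_m\rangle$ converge uniformly modulo $\mathbb{Z}$; the whole difficulty is that convergence mod $\mathbb{Z}$ does not survive division by $k$ unless the integer parts $E_p(x)$ of the tails vanish. That is exactly what Proposition \ref{prop:tendtozero} supplies: it gives $\max_x|\langle s_n(x),P_{n,m}v_m\rangle|<\epsilon/4$ for large $n$, whence $|E_{p+1}(x)-E_p(x)|<1$ and $E_0(x)=0$, so $E_p(x)=0$ for all $p$. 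The tails are then genuinely small (not merely small mod $\mathbb{Z}$), dividing by $k$ is now harmless, and $(\alpha r_n)_n$ is uniformly Cauchy mod $\mathbb{Z}$, contradicting $\alpha\notin E(X,T)$ via Proposition \ref{prop:condcont}. Note that only uniform smallness of the tail sums is needed, a strictly weaker requirement than the absolute summability you aim for. To fix your proof, replace the appeal to a (nonexistent) converse of Lemma \ref{lemma:thelemma} by this two-step argument: mod-$\mathbb{Z}$ convergence from $k\alpha\in E(X,T)$, then Proposition \ref{prop:tendtozero} to kill the integer ambiguity.
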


\begin{proof}
From Lemma \ref{lemma:thelemma} and Lemma \ref{lemma:orthogonalite}, there exist a positive integer $m$, a real vector $v_m$ and an integer vector $w_m$ satisfying the items (1), (2) and (3).
From Proposition \ref{prop:condcont}, the sequence $(k\alpha r_n )_{n}$ converges uniformly $(\mod  \mathbb{Z})$.
Moreover, the relation \eqref{e:formulareturn} gives us for any integer $n> m+1$
\begin{align*}
\alpha r_n (x) = & 
\displaystyle \sum_{i= 1}^{n-1}  \alpha \langle  s_i(x) , P_i H_1 \rangle  \\
= & \displaystyle \sum_{i=1}^m \langle  s_i(x) , \alpha P_i H_1 \rangle 
+
\sum_{i = m+1}^{n-1}   \langle  s_i(x) , P_{i,m} \left(\frac 1k {v_m} +\frac 1k w_{m} \right)\rangle.
\end{align*}

Suppose that $\frac 1k P_{i,m}w_m$ is an integer vector for any  large enough integer  $i$. To obtain a contradiction, by Proposition \ref{prop:condcont}, it suffices to show that $(\alpha r_n )_{n}$ is a Cauchy sequence.
As $(k\alpha r_n)_{n}$ converges uniformly ($\mod \mathbb{Z}$), we deduce that $(\sum_{i = m+1}^{n}   \langle  s_i(x) , P_{i,m} {v_m} \rangle )_{n}$ converges uniformly ($\mod \mathbb{Z}$). Hence, given  $\epsilon \in (0, \frac 12)$, there exists an integer $n_{0}$ such that for any integer $n \ge n_{0}$, any integer $p \ge 0$ and $x \in X$, there exists an integer $E_{p}(x)$ such that 
$$  \vert \sum_{i=n}^{n+p} \langle s_{i}(x), P_{i,m}v_{m} \rangle - E_{p}(x) \vert < \frac{\epsilon}4.$$

By Proposition \ref{prop:tendtozero}, we can assume that the integer $n_{0}$ is sufficiently large to have 
\begin{align}\label{eq:prop10}
 \max_{x\in X} |\langle  s_n(x) ,  P_{n,m}v_{m} \rangle | < \frac {\epsilon}4 \hspace{0.5cm} \forall n \ge n_{0}.
\end{align}
Now fix $n\geq n_0$. Notice  that 
\begin{eqnarray*}
E_{p+1}(x) - E_{p}(x) =& E_{p+1}(x) - \sum_{i=n}^{n+p+1} \langle s_{i}(x), P_{i,m}v_{m} \rangle 
                                     -\left(  E_{p}(x) - \sum_{i=n}^{n+p} \langle s_{i}(x), P_{i,m}v_{m} \rangle \right) \\
                                    & +\langle s_{n+p+1}(x), P_{n+p+1,m} v_{m} \rangle.  
 \end{eqnarray*}
We deduce  that $\vert E_{p+1}(x) - E_{p}(x) \vert < \epsilon < \frac 12$ and so $E_{p+1}(x) = E_{p}(x)$ for any $x\in X$, and  $p\ge 0$. 

The inequality \eqref{eq:prop10} ensures that $E_{0}(x) =0$, and thus $E_{p}(x)= 0$ for any $p \ge 0$. It follows that  $(\sum_{i = m+1}^{n}   \langle  s_i(x) , P_{i,m} {v_m} \rangle )_{n}$ is a uniform Cauchy type sequence in $x$,  so the sequence $(\alpha r_{n})_{n}$ converges uniformly $(\mod \ZZ)$. This gives a contradiction.
\end{proof}

\subsection{Group of eigenvalues versus image group of dimension group}

A fundamental fact for this work is the following proposition (Proposition \ref{prop:eigincludedinimage}).
This has been previously shown in \cite{itza-ortiz:2007}, but has been also obtained in \cite{Cortez&Durand&Host&Maass:2003} (Proposition 11) without to be claimed.

\begin{proposition}
\label{prop:eigincludedinimage} Let $(X,T)$ be a minimal Cantor system. 
Then the set of additive continuous eigenvalues $E(X,T)$ is a subgroup of the image subgroup  $I(X,T)$.  
\end{proposition}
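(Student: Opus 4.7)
The plan is to exploit directly Lemma \ref{lemma:orthogonalite}, which, for any $\alpha \in E(X,T)$, already produces the required representation of $\alpha$ as an integral of an integer-valued function with respect to every invariant measure.

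First, I would fix $\alpha \in E(X,T)$ and invoke Lemma \ref{lemma:thelemma} to obtain an integer $m>1$, a real vector $v_m$ and an integer vector $w_m$ satisfying $\alpha P_mH_1=v_m+w_m$ and $\sum_{n>m}\|P_{n,m}v_m\|_\infty<\infty$. Then Lemma \ref{lemma:orthogonalite} gives the crucial identity
\begin{equation*}
\alpha = \mu_m^T w_m = \sum_{k=1}^{C_m} w_m(k)\,\mu(B_m(k))
\end{equation*}
for every $T$-invariant probability measure $\mu$. The right-hand side depends on $\mu$ only through the clopen sets $B_m(k)$ of the base of $\P_m$.

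This suggests the following candidate: define
\begin{equation*}
f \= \sum_{k=1}^{C_m} w_m(k)\,\uno_{B_m(k)}.
\end{equation*}
Because the sets $B_m(k)$ are pairwise disjoint and clopen in $X$, and the coefficients $w_m(k)$ are integers, the function $f$ belongs to $C(X,\ZZ)$ (it takes the value $0$ on the complement of the base $B_m$). Then for every $\mu \in \M(X,T)$,
\begin{equation*}
\int f\,d\mu = \sum_{k=1}^{C_m} w_m(k)\,\mu(B_m(k)) = \langle w_m,\mu_m\rangle = \alpha,
\end{equation*}
where the last equality is Lemma \ref{lemma:orthogonalite}. Hence $\alpha \in I(X,T)$, which together with the already known group structure of $E(X,T)$ yields the inclusion $E(X,T)\subset I(X,T)$ as a subgroup.

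There is essentially no obstacle here: all the serious work has been done in Lemmas \ref{lemma:thelemma} and \ref{lemma:orthogonalite}. The only mild point to verify is that the coefficient vector $w_m$ is indeed an \emph{integer} vector (so $f\in C(X,\ZZ)$ and not merely $C(X,\RR)$) and that it is the \emph{same} $w_m$ that realizes $\alpha$ for every invariant measure $\mu$ simultaneously, both of which are built into Lemma \ref{lemma:orthogonalite}. This also explains why the authors remark that the same conclusion follows from Proposition~11 of \cite{Cortez&Durand&Host&Maass:2003}: that proposition already contains precisely the representation $\alpha = \int f\,d\mu$ with $f \in C(X,\ZZ)$ for all invariant $\mu$.
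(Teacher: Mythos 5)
Your proposal is correct and follows exactly the paper's own argument: invoke Lemma \ref{lemma:thelemma} and Lemma \ref{lemma:orthogonalite} to get $\alpha=\mu_m^T w_m$ for every invariant $\mu$, and realize this as $\int f\,d\mu$ with $f=\sum_{k=1}^{C_m}w_m(k)\uno_{B_m(k)}\in C(X,\ZZ)$. Nothing is missing.
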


\begin{proof}
It suffices to show that $E(X,T) $ is a subset of $I (X,T)$.
From Lemma \ref{lemma:orthogonalite},  there exist a positive  integer $m$ and a vector $w_m\in \ZZ^{C_m}$  such that for every invariant measure $\mu$ one gets 
$$
\alpha=\mu_m^Tw_m=\int f d\mu,
$$

where $f=\sum_{k=1}^{C_m}w_m(k)1_{B_m (k)}$. 
This shows that $\alpha$ is in $I(X,T)$. 
\end{proof}



For any $\alpha \in I (X,T)$, by definition, for every invariant measure $\mu$ there exists $f_{\mu} \in C (X,\mathbb{Z})$ (thus depending on $\mu$) such that $\alpha = \int f_\mu d\mu$. 
In the next lemma, we show this function can be chosen independently of the invariant measures. 
 
\begin{lemma}
\label{lemma 0}
Let $(X,T)$ be a minimal Cantor system. If $\alpha $ belongs to  the image subgroup $I(X,T)$, then there exists a function $g\in C(X , \mathbb{Z})$ such that $\int g d\mu = \alpha$, for any measure $\mu \in \mathcal{M} (X,T)$.
\end{lemma}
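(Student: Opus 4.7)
The plan is to realize $\alpha$ as the integral of a function of a very specific shape attached to the CKR partition structure, and then worry about the measure-dependence.

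First I would show that, for $n$ large enough, any $f\in C(X,\mathbb{Z})$ is cohomologous to a function of the form $f_w:=\sum_{k=1}^{C_n} w(k)\mathbf{1}_{B_n(k)}$ for some $w\in\mathbb{Z}^{C_n}$. Indeed, by \textbf{(KR4)} the function $f$ is constant on the atoms of $\mathcal{P}_n$ once $n$ is large; and for each tower, each characteristic function $\mathbf{1}_{T^j B_n(k)}$ is cohomologous to $\mathbf{1}_{B_n(k)}$ modulo an explicit coboundary (the difference is $h\circ T-h$ for $h=-\mathbf{1}_{\bigcup_{i\geq j}T^iB_n(k)}$). Summing values along each tower into its base replaces $f$ by the claimed $f_w$. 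Moreover $\int f_w\, d\mu=\langle w,\mu_n\rangle$ for every invariant $\mu$. Thus the lemma reduces to: \emph{find $n$ and $w\in\mathbb{Z}^{C_n}$ with $\langle w,\mu_n\rangle=\alpha$ for every $\mu\in\mathcal{M}(X,T)$}.

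Second I would collect pointwise witnesses. The hypothesis $\alpha\in I(X,T)$ together with the previous reduction yields, for each $\mu$ separately, a level $n(\mu)$ and a vector $w_\mu\in\mathbb{Z}^{C_{n(\mu)}}$ with $\langle w_\mu,\mu_{n(\mu)}\rangle=\alpha$. Using \eqref{eq:vectormeasure} we can push any such witness up via $w_\mu\mapsto P_{n,n(\mu)}w_\mu$, so by telescoping we may assume a single common level $n$.

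The core of the argument is then to combine these measure-by-measure witnesses into a single universal $w$. The plan is to pick one invariant measure $\mu_0$ and a corresponding $w_0\in\mathbb{Z}^{C_n}$ with $\langle w_0,(\mu_0)_n\rangle=\alpha$, and to consider the continuous affine defect $\delta:\mathcal{M}(X,T)\to\mathbb{R}$, $\delta(\mu):=\langle w_0,\mu_n\rangle-\alpha$. It vanishes at $\mu_0$ and, by hypothesis, $\delta(\mu)\in\tau_\mu(K^0(X,T))$ at every $\mu$. The goal is to produce an integer correction $h$ at some level $n'\geq n$ with $\langle h,\mu_{n'}\rangle=-\delta(\mu)$ for every $\mu$; then $w:=P_{n',n}w_0+h$ answers the question. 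The vector $h$ is obtained from the Riesz interpolation property of the simple dimension group $K^0(X,T)$ combined with the Choquet-simplex structure of $\mathcal{M}(X,T)$, refining the CKR partition as needed so that the level-$n'$ affine forms capture the pointwise data.

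The hard part is precisely this fusion step. When $\mathcal{M}(X,T)$ is infinite-dimensional a finite induction over ergodic measures does not close, so one must exploit compactness of $\mathcal{M}(X,T)$ together with the fact that the image of $K^0(X,T)$ inside the continuous affine functions on $\mathcal{M}(X,T)$ is approximated by the level-$n$ forms $\mu\mapsto\langle w,\mu_n\rangle$ ($w\in\mathbb{Z}^{C_n}$) and enjoys Riesz interpolation at each finite stage. The interaction between the convex-geometric structure of $\mathcal{M}(X,T)$ and the inductive-limit structure of $K^0(X,T)$ is what makes a single integer representative exist.
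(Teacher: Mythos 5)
There is a genuine gap, and it sits exactly where you yourself flag the ``hard part'': the fusion step is never carried out, and the tools you invoke do not obviously carry it out. Two concrete problems. First, the reduction to a single common level $n$ already fails as stated: for each $\mu$ you get a witness at some level $n(\mu)$, and you can push it up to any level $n\ge n(\mu)$ via $P_{n,n(\mu)}$, but $\sup_\mu n(\mu)$ may be infinite over the uncountable (and possibly infinite-dimensional) simplex $\mathcal{M}(X,T)$, so no finite level dominates all witnesses. Second, and more seriously, the Riesz interpolation property places one group element between finitely many prescribed ones; it does not produce an element of $K^0(X,T)$ whose trace takes an exactly prescribed value $-\delta(\mu)$ at every trace $\tau_\mu$ simultaneously. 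Producing such an element is precisely the content of the lemma, so the argument is circular at the one point where something has to be proved; your closing paragraph describes what a proof would need to achieve rather than supplying one.

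The paper's proof sidesteps all of this with a short Baire category argument: for each $g\in C(X,\mathbb{Z})$ set $\mathcal{M}_g=\{\mu\in\mathcal{M}(X,T);\ \int g\,d\mu=\alpha\}$; each $\mathcal{M}_g$ is closed and convex, the hypothesis $\alpha\in I(X,T)$ says these sets cover $\mathcal{M}(X,T)$, and $C(X,\mathbb{Z})$ is countable, so by Baire's theorem some $\mathcal{M}_{g_0}$ has non-empty interior. An affine map which is constant on a non-empty open subset of a convex set is constant (restrict to the segment joining an interior point of that open set to an arbitrary measure), hence $\int g_0\,d\mu=\alpha$ for every $\mu$. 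Your first reduction (every class in $K^0(X,T)$ has a representative supported on the bases of a CKR partition) is correct and useful elsewhere, but it is not needed here and does not substitute for the covering-and-category step.
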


\begin{proof}
If $\mathcal{M} (X,T)$ is a singleton then the result is obvious.  
From now, we will assume that $\mathcal{M} (X,T)$ contains at least two elements.
For any $g\in C(X,\mathbb{Z})$, we define
$$
\mathcal{M}_g=\left\{\mu \in \mathcal{M} (X,T); \  \int g d\mu = \alpha \right\}.
$$

Observe that $\mathcal{M}_g$ is convex and closed with respect to the weak$^*$ topology in $\mathcal{M} (X,T)$.  
From the definition of $I (X,T)$, it is clear we have
$$
\mathcal{M} (X,T) = \bigcup_{g\in C(X,\mathbb{Z})} \mathcal{M}_g.
$$
Since $C(X,\mathbb{Z})$ is countable, Baire's theorem implies there exists a map  $g_{0}\in C(X,\mathbb{Z})$ such that $\mathcal{M}_{g_{0}}$ has a non empty interior. 
It follows that  $I \colon \mu \mapsto \int g_{0}d\mu$ is an affine map which is constant on an open set of $\M (X,T)$.

We get the conclusion by showing this map is  constant. To prove this, let $\mu_0$ be in the interior of $\mathcal{M}_{g_{0}}$, and let $\mu_{1}$ be another measure in $\M (X,T)$. 
The map $t \in [0,1] \mapsto I(t\mu_{0} +(1-t) \mu_{1}) \in \RR$ is an affine map taking at least two times the same value $\alpha$. So, it is a constant map and $I(\mu_{0}) = I(\mu_{1}) = \alpha$. Since the measure $\mu_{1}$ is arbitrary, this concludes the proof.
\end{proof}

\begin{remark}\label{Glasner-Weiss}
{\rm Observe that from \cite[Lemma  2.4]{Glasner&Weiss:1995}, Lemma \ref{lemma 0} implies that for any $\alpha\in I(X,T)\cap (0,1)$ there exists a clopen set $U$ such that $\alpha=\mu(U)$ for any $T$-invariant probability measure $\mu$. In particular this is true when $\alpha$ is in $E(X,T)$.
}
\end{remark}

By Theorem \ref{theo:hps}, this lemma can of course be rephrased in terms of dimension group $(G,G^+,u)$. 
Let $\tilde{G}$ denote the group $\{g \in G; \ \tau(g) = \tau (g') \textrm{ for every traces } \tau, \tau' \in S(G, G^+,u) \}$. Notice that the unit $u$ and any infinitesimal in  $\mathrm{Inf}(G)$ belong to $\tilde{G}$.
 
\begin{corollary}\label{cor:onto}
Let $(G,G^+,u)$ be a simple dimension group. Then for any trace $\tau \in S(G,G^+,u)$, the morphism
$$ \tau \colon \tilde{G} \to I(G,G^+,u) $$ is a surjective order preserving morphism. 
In particular the dimensions groups $(\tilde{G} /\mathrm{Inf}(G),\tilde{G}\cap G^+/\mathrm{Inf}(G), [u])$, with $[u]$ the class of the unit, and $(I(G,G^+, u), I(G,G^+,u) \cap \RR^+, 1)$ are isomorphic. 
\end{corollary}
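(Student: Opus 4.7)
The strategy is to verify the three claimed properties of $\tau$ restricted to $\tilde G$ — that it lands in $I(G,G^+,u)$, is order preserving, and is surjective — and then to deduce the isomorphism of dimension groups by a first-isomorphism-theorem argument, taking care that the order structures match on the quotient.

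For the first two properties, if $g\in\tilde G$ then by definition of $\tilde G$ every trace takes the same value $\tau(g)$ on $g$, so $\tau(g)\in\tau'(G)$ for every trace $\tau'$, hence $\tau(g)\in\bigcap_{\tau'}\tau'(G)=I(G,G^+,u)$, and the image is independent of the chosen $\tau$. Since a trace is by definition non-negative on $G^+$, we obtain $\tau(\tilde G\cap G^+)\subset I(G,G^+,u)\cap\RR^+$, so $\tau|_{\tilde G}$ is order preserving. For surjectivity, I would use Theorem \ref{theo:hps} to realize $(G,G^+,u)$ as $K^0(X,T)$ for some minimal Cantor system $(X,T)$; under this identification traces are exactly the integration functionals $\tau_\mu\colon [f]\mapsto\int f\,d\mu$ against $\mu\in\M(X,T)$, and $I(G,G^+,u)$ corresponds to $I(X,T)$. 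Given $\alpha\in I(X,T)$, Lemma \ref{lemma 0} provides $g\in C(X,\ZZ)$ with $\int g\,d\mu=\alpha$ for every $\mu\in\M(X,T)$; the class $[g]$ then lies in $\tilde G$ and satisfies $\tau([g])=\alpha$, which gives surjectivity.

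To pass to the quotient, note that every infinitesimal is annihilated by every trace, so in particular $\mathrm{Inf}(G)\subset\tilde G$; conversely, for $g\in\tilde G$ the common value $\tau(g)$ is zero iff every trace vanishes on $g$ iff $g\in\mathrm{Inf}(G)$. Hence $\ker(\tau|_{\tilde G})=\mathrm{Inf}(G)$, and the first isomorphism theorem yields a group isomorphism $\tilde G/\mathrm{Inf}(G)\to I(G,G^+,u)$ sending $[u]$ to $\tau(u)=1$. To promote it to an isomorphism of dimension groups, one must check that it carries the positive cone $(\tilde G\cap G^+)/\mathrm{Inf}(G)$ onto $I(G,G^+,u)\cap\RR^+$. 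The inclusion from left to right is already the order preservation step. For the reverse inclusion — the only point requiring real care — given $\alpha\in I(G,G^+,u)\cap\RR^+\setminus\{0\}$, a preimage $g\in\tilde G$ exists by surjectivity and satisfies $\tau'(g)=\alpha>0$ for every trace $\tau'$; invoking the characterization recalled in Section \ref{subsec:dimtraceserg} that in a simple dimension group $G^+=\{h\in G:\tau'(h)>0\text{ for every trace }\tau'\}\cup\{0\}$ forces $g\in G^+$, so $[g]$ lies in the positive cone of the quotient.

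The only genuinely non-formal ingredient is the surjectivity statement, which rests entirely on the dynamical translation given by Theorem \ref{theo:hps} combined with Lemma \ref{lemma 0}; producing an algebraic preimage of an arbitrary $\alpha\in I(G,G^+,u)$ without passing through the Cantor model would seem considerably harder, so this is where I expect the main conceptual content to sit.
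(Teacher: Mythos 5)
Your proposal is correct and follows essentially the same route as the paper: the corollary is presented there as the dimension-group rephrasing of Lemma \ref{lemma 0} via Theorem \ref{theo:hps}, with surjectivity coming from the function $g$ of that lemma and the remaining verifications (image in $I(G,G^+,u)$, kernel equal to $\mathrm{Inf}(G)$, matching of positive cones via $G^+=\{a: p(a)>0\ \forall p\}\cup\{0\}$) being the routine steps you spell out. No gaps.
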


We are now able to prove our main theorem (Theorem \ref{theo:main}).

\begin{proof}[Proof of Theorem \ref{theo:main}]
Suppose $I(X,T)/E(X,T)$ is not torsion free. 
Then, there exist $\alpha \in I(X,T)\setminus E(X,T)$ and an integer $k>1$ such that  $k\alpha\in E(X,T)$. 
From Lemma \ref{the lemma}, this implies there exist a positive  integer $m$, vectors  $w_m\in \ZZ^{C_m}$ and $v_m\in \RR^{C_m}$ such that $k\alpha P_mH_1=v_m+w_m$. We recall, we set $v_n=P_{n,m}v_m$ and $w_n=P_{n,m}w_m$ for every $n\geq m$, and  the vector $v_{n}$ is orthogonal to the vector of measures  $\mu_{n}$. 

Since $\alpha$ is not a continuous eigenvalue, Lemma \ref{the lemma} implies
there must be infinitely many $n$'s such that the following set is not empty:
$$
I_n=\{i\in\{1,\cdots, C_n\}: w_n(i) \mbox{ is not divisible by } k\}. 
$$
Telescoping the sequence of CKR partitions if needed, we can assume that $I_n\neq \emptyset$ for every sufficiently large $n$. For $1\leq i\leq C_n$, we write
$$
w_n(i)=ka_n(i)+b_n(i), 
$$
where  $a_{n}(i ) \in \ZZ$ and  $b_n(i)$ is some integer in $\{0,\ldots, k-1\}$. 
Thus, the index $i$ is in   $I_n$ if and only if $b_n (i)\neq 0$. 
Observe that for any $n \ge m$
\begin{equation}\label{eq1}
\alpha=\alpha\mu_1^TH(1)=\mu_n^TP(n)H(1)\alpha=\frac{1}{k}\mu_n^T\left(v_n+w_n\right)=\frac{1}{k}\mu_n^Tw_n=\frac{1}{k}\mu_n^Tb_n+ \mu_n^Ta_n,
\end{equation}

Since $\alpha$ is in $I(X,T)$, Lemma \ref{lemma 0} implies there exists a function  $f_1\in C(X,\ZZ)$ such that for every invariant probability measure $\mu$,
$$
\alpha=\int f_1 d\mu.
$$
On the other hand,
$$
\mu_n^Ta_n=\int f_2 d\mu, 
$$
where  $f_2=\sum_{i=1}^{C_n}a_n(i)1_{B_n(i)}$. Thus  Equation (\ref{eq1})   implies  that   for every invariant probability measure $\mu$,
 $$
\frac{1}{k}\mu_n^Tb_n=\int(f_1-f_2)d\mu=\int f d\mu,
$$
where $f=f_1-f_2$.
Thus we obtain,
\begin{align*}
\int kfd\mu & = \mu_n^T b_n = \int \sum_{i=1}^{C_n} b_n(i) 1_{B_n (i)} d\mu.
\end{align*}
Hence the map
$$
h = kf- \sum_{i=1}^{C_n} b_n(i) 1_{B_n (i)} 
$$ 
belongs to ${\rm Inf} (X,T)$ that is assumed to be trivial. 
Consequently, there exists a map $g\in C (X,\mathbb{Z} )$ such that $h = g - g\circ T$.

Choose $p\geq n$ such that $f$ is constant on any atom of the partition  $\mathcal{P}_p$ and such that the function $g$ is constant on  the base $B_p$.
This is always possible as the sequence $\{\mathcal{P}_j \}_{j}$ satisfies ({\bf KR1}-{\bf KR6}).
Let $x$ be an element in $B_{p}\subset  X$ and  let $1\leq i\leq C_p$ be such that $x\in B_p (i)$.  We have then

\begin{align*}
0 = & g(x) - g(T^{h_p(i)} x) = \sum_{j=0}^{h_p(i)-1} h(T^j x) = \sum_{j=0}^{h_p(i)-1} kf(T^j x) -  \sum_{j=0}^{h_p(i)-1} \sum_{l=1}^{C_n} b_n(l) 1_{B_n (l)} (T^j x)\\
 = & \left( k\sum_{j=0}^{h_p(i)-1} f(T^j x) \right) -\sum_{l=1}^{C_n}b_n(l)\sum_{j=0}^{h_p(i)-1}1_{B_n (l)} (T^j x)\\
= &  \left(k\sum_{j=0}^{h_p(i)-1} f(T^j x)\right) -\sum_{l=1}^{C_n}b_n(l)P_{p,n}(i,l)\\
= &  \left(k\sum_{j=0}^{h_p(i)-1} f(T^j x)\right)-(P_{p,n}b_n)(i).
\end{align*}

It follows that all the coordinates of  $P_{p,n}b_n$ are divisible by $k$.
On the other hand, for every $i\in I_p$ we have
$$
w_p(i)=(P_{p,n}w_n)(i)=P_{p,n}(ka_n+b_n)(i)=k(P_{p,n}a_n)(i)+(P_{p,n}b_n)(i),
$$
which contradicts that $I_p$ is non empty.
\end{proof}

\section{Examples}

In the sequel we will construct various examples of minimal Cantor systems starting with an ``abstract'' simple dimension group $(G,G^+ , u)$ having some fixed properties, and then we will make use of Theorem \ref{theo:hps} to have the existence of a minimal Cantor system $(X,T)$ having this prescribed simple dimension group.
Thus, we will most of the times avoid to mention this Theorem and we will identify $K^0 (X,T)$ to $(G,G^+ , u)$.

In some examples we will need some classical definition we will not recall and that can be found in any book on {\em Ergodic Theory}, we refer the reader to \cite{Petersen:1983} and \cite{Glasner:2003}.

\subsection{(Measurable) eigenvalues are not related with (strong) orbit equivalence}
At the difference of continuous additive  eigenvalues, irrational additive measurable eigenvalues can not be interpreted in terms of dimension group. Counter examples mainly come from the powerful result  obtained by N. Ormes in \cite{Ormes:1997} (Theorem 6.1), generalizing Jewett-Krieger Theorem to strong orbit equivalence classes.  

\begin{theorem}[\cite{Ormes:1997}]\label{Ormes97}
Let $(X,T)$ be a minimal Cantor systems and $\mu$ be an ergodic $S$-invariant Borel probability measure.
Let $(Y,S, \nu)$ be an ergodic measurable dynamical system of a non-atomic Lebesgue probability space $(Y,\nu )$ such that $\exp (2i\pi /p)$ is an eigenvalue of $(Y,S, \nu)$
for any element $\frac 1p \in \QQ (K^{0}(X,T))$.
Then, there exists a minimal Cantor system $(X,T')$ strongly orbit equivalent to $(X,T)$ such that $(X,S',\mu)$ is measurably conjugate to $(Y,S,\nu )$.
\end{theorem}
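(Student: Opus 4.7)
The plan is to build $(X,T')$ by rewriting the Bratteli--Vershik combinatorics underlying $(X,T)$ with a new proper ordering whose Vershik map is measurably conjugate to $(Y,S,\nu)$. First, fix a sequence $\{\P_n\}_{n\in\NN}$ of Kakutani--Rohlin partitions for $(X,T)$ satisfying {\bf (KR1)}--{\bf (KR6)}, producing an ordered Bratteli diagram $B$ whose incidence matrices $\{M_n\}_n$ determine the dimension group $K^0(X,T)$. By Theorem \ref{theo:GPS}, the strong orbit equivalence class of $(X,T)$ is exactly the set of minimal Cantor systems admitting a Bratteli--Vershik representation on the same unordered diagram (up to telescoping); thus it suffices to equip $B$ with a new proper ordering whose Vershik system is a topological model for $(Y,S,\nu)$.

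Second, I would construct inductively in $(Y,S,\nu)$ a nested sequence of measurable Rohlin castles whose combinatorial shape matches $\P_n$: at stage $n$ one has $C_n$ measurable columns of heights $h_n(k)$, incidences $M_n$ between successive stages, and base measures read off from the trace $\tau_\nu$. Rohlin's lemma applied iteratively permits such castles to be realized up to small error, and by refining the base partitions at each stage one can arrange that the associated partitions eventually generate the Borel $\sigma$-algebra of $Y$. The rational eigenvalue hypothesis is used here in an essential way: by Theorem \ref{theo:gpso}, for every $\frac 1p\in \QQ(K^0(X,T))$ the number $\exp(2i\pi/p)$ must be a continuous eigenvalue of any Cantor model of the dimension group, hence a measurable one for any ergodic invariant measure; the assumption ensures $(Y,S,\nu)$ already carries all such eigenvalues, so the cyclic $p$-factors forced by the dimension group can be aligned with the castle heights modulo $p$.

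Third, set $X:=\varprojlim_n Y/\!\sim_n$, where $\sim_n$ identifies points lying in the same atom of the $n$-th castle; this is naturally a Cantor space carrying a minimal homeomorphism $T'$ induced by $S$. The canonical projection $\pi\colon Y\to X$ is a Borel isomorphism onto a $T'$-invariant conull subset and pushes $\nu$ to a $T'$-invariant measure matching the prescribed $\mu$ (since $\mu$ and $\nu$ can be arranged to determine the same trace data on the common dimension group, via the identification underlying Lemma \ref{lemma 0}). By construction $(X,T')$ is the Bratteli--Vershik system of $B$ with the new ordering, so $K^0(X,T')\cong K^0(X,T)$ and Theorem \ref{theo:GPS} gives strong orbit equivalence between $(X,T)$ and $(X,T')$, while $\pi$ provides the required measurable conjugacy.

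The main obstacle is the delicate coordination of two competing inductive procedures: the rigid combinatorial shape of $B$, which encodes the strong orbit equivalence class, and the generation of the Borel $\sigma$-algebra of $Y$ by the castle atoms. Whenever $\QQ(K^0(X,T))$ is non-trivial, the cyclic factors it forces on any Cantor realization impose congruence conditions on the tower heights at each level; the eigenvalue hypothesis is precisely what removes the resulting obstruction and allows the two inductions to be run simultaneously. This is exactly where Ormes's original argument in \cite{Ormes:1997} is most intricate.
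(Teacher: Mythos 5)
You should first note that the paper itself offers no proof of this statement: it is imported verbatim from \cite{Ormes:1997} (Theorem 6.1 there), so there is no internal argument to compare yours against. Judged on its own terms, your outline correctly identifies the strategy Ormes actually follows --- a Jewett--Krieger-type construction that re-orders the Bratteli diagram of $(X,T)$ so that the resulting Vershik system is a topological model of $(Y,S,\nu)$ --- but the step you delegate to ``Rohlin's lemma applied iteratively'' is precisely where the whole difficulty of the theorem lives, and as stated it does not go through.

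The gap is this: Rohlin's lemma produces a single tower covering all but $\varepsilon$ of the space, whereas your inverse-limit construction requires, at every level $n$, an \emph{exact} castle --- a genuine partition of $Y$ into $C_n$ columns of the prescribed heights $h_n(k)$, with the prescribed incidences into the next level and with base measures equal to the numbers $\mu(B_n(k))$ dictated by the trace. ``Up to small error'' is not good enough: the leftover $\varepsilon$-set destroys both the identification $K^0(X,T')\cong K^0(X,T)$ and the equality of the pushed-forward measure with $\mu$, and the parenthetical remark that $\mu$ and $\nu$ ``can be arranged to determine the same trace data'' is circular, since arranging this is the content of the construction. Moreover, an exact castle all of whose heights are divisible by $p$ exists in $(Y,S,\nu)$ only if $\exp(2i\pi/p)$ is an eigenvalue of $(Y,S,\nu)$ --- that is exactly why the hypothesis involving $\QQ(K^{0}(X,T))$ appears --- but you only assert that this hypothesis ``removes the obstruction'' without showing how one passes from the eigenfunction to an exact tower decomposition with the required heights, incidences and measures, nor how one simultaneously forces the castle atoms to generate the Borel $\sigma$-algebra and the induced order on the diagram to be proper (so that the Vershik map is a well-defined minimal homeomorphism). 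These are the steps that occupy most of Ormes's argument; your text names the right obstacles but does not resolve any of them, so the proposal is a roadmap rather than a proof.
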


In the same paper Ormes obtained the following remarkable generalization of Dye's theorem \cite{Dye:1959}.
\begin{theorem}[\cite{Ormes:1997}]
\label{theo:ormes}
Let $(Y_1,S_1, \nu_1)$ and $(Y_2 , S_2 , \nu_2)$ be ergodic dynamical systems of non-atomic Lebesgue probability spaces.
There are minimal Cantor systems $(X,T_1)$ and $(X,T_2)$ and a Borel probability measure $\mu$ on $X$ which is $T_1$ and 
$T_2$ invariant such that:
\begin{enumerate}
\item
$(X, T_i , \mu )$ is measurably conjugate to $(Y_i , S_i, \nu_i)$, for $i=1,2$,
\item
$(X, T_1)$ is strongly orbit equivalent to $(X,T_2)$ by the identity map. 
\end{enumerate}
\end{theorem}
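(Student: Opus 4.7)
The plan to prove Theorem \ref{theo:ormes} is to combine Dye's theorem from measurable ergodic theory with a careful Bratteli--Vershik construction built on top of Theorem \ref{Ormes97}. By Dye's theorem, any two ergodic probability-preserving systems on non-atomic Lebesgue spaces are measurably orbit equivalent; so up to measurable conjugation I may reduce to the case where $Y_1 = Y_2 = Y$, $\nu_1 = \nu_2 = \nu$, and the $S_1$-orbits coincide with the $S_2$-orbits on a conull subset of $Y$.

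Next, I fix a simple dimension group $(G,G^+,u)$ whose rational subgroup equals $\ZZ$, so that Theorem \ref{Ormes97} imposes no rational eigenvalue constraint. Applying that theorem to $(Y,S_1,\nu)$ yields a minimal Cantor system $(X, T_1)$, a $T_1$-invariant Borel probability measure $\mu$, and a measurable conjugacy $\psi\colon (X,T_1,\mu) \to (Y, S_1,\nu)$. Transporting $S_2$ through $\psi$ gives a measurable map $T_2 = \psi^{-1}\circ S_2\circ\psi$ on $X$ that preserves $\mu$ and whose orbits coincide almost everywhere with those of $T_1$, by the Dye reduction of the preceding paragraph.

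The heart of the proof is to promote this measurable $T_2$ to a Cantor homeomorphism whose orbits coincide \emph{everywhere} with those of $T_1$. I would represent $(X,T_1)$ by a nested sequence of clopen Kakutani--Rohlin partitions $\{\mathcal{P}_n\}_n$ as in Section \ref{subsec:partandto}, and define $T_2$ inductively by choosing, at each level $n$, a new ordering of the atoms within every tower of $\mathcal{P}_n$ which approximates the measurable dynamics $\psi^{-1}\circ S_2\circ\psi$ ever more accurately. Atoms of a single tower lie on a common $T_1$-orbit, so any such re-ordering preserves both the $T_1$-orbit partition and the measure $\mu$ (a tower is a stack of equal-measure atoms). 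The main obstacle will be to make the successive re-orderings coherent so that the limit $T_2$ is continuous, minimal, and measurably conjugate (not merely orbit equivalent) to $(Y,S_2,\nu)$; this requires a Rokhlin-type inductive refinement closely paralleling the proof of Theorem \ref{Ormes97} in \cite{Ormes:1997}, where at each stage one enforces finitely many symbolic constraints coming from a Rokhlin tower for $S_2$ while keeping the underlying Bratteli diagram (and hence the dimension group and the measure) unchanged.
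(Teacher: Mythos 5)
The paper does not prove this statement at all: it is quoted verbatim from \cite{Ormes:1997}, so there is no internal argument to compare against. Measured against the way this result is actually derived in Ormes' paper (and implicitly used here), your proposal has one structural inefficiency and one genuine gap. The inefficiency is the Dye reduction: nothing in the remainder of your argument uses the fact that $S_1$ and $S_2$ have been arranged to share orbits on $Y$, because the realization machinery only ever sees $(Y_2,S_2,\nu_2)$ as an abstract ergodic system with prescribed rational point spectrum. The gap is the ``promotion'' step. Converting the transported measurable map $\psi^{-1}\circ S_2\circ\psi$ (which agrees with the $T_1$-orbit relation only almost everywhere) into a homeomorphism of $X$ with the \emph{same orbits everywhere}, minimal, and measurably conjugate to $(Y_2,S_2,\nu_2)$, is not a routine tower-reordering: it is precisely the content of the Strong Orbit Realization Theorem (Theorem \ref{Ormes97}), i.e.\ the hard theorem of \cite{Ormes:1997}. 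Your sketch of it (``re-order atoms within each tower so as to approximate the measurable dynamics'') names the right technique but defers the entire difficulty --- coherence of the re-orderings across levels, continuity of the limit, and upgrading orbit equivalence to measurable conjugacy --- to ``a refinement closely paralleling the proof of Theorem \ref{Ormes97}.'' As written, the proof is therefore circular in effect: you use Theorem \ref{Ormes97} as a black box to produce $T_1$ and then attempt to re-prove its content by hand to produce $T_2$.

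The standard (and much shorter) derivation avoids both problems: fix a uniquely ergodic minimal Cantor system $(X,T)$ with ergodic measure $\mu$ and $\QQ(K^0(X,T))=\ZZ$, so that the spectral hypothesis of Theorem \ref{Ormes97} is vacuous. Apply Theorem \ref{Ormes97} twice, to $(Y_1,S_1,\nu_1)$ and to $(Y_2,S_2,\nu_2)$, obtaining homeomorphisms $T_1$ and $T_2$ of the same space $X$, each having the same orbits as $T$ (hence as each other), each preserving $\mu$, and with $(X,T_i,\mu)$ measurably conjugate to $(Y_i,S_i,\nu_i)$. It remains only to check that the orbit cocycles between $T_1$ and $T_2$ each have at most one point of discontinuity; this holds because in Ormes' construction the discontinuity point can be taken to be the common base point of the Bratteli--Vershik representation, or alternatively one notes $K^0(X,T_1)\cong K^0(X,T)\cong K^0(X,T_2)$ and invokes Theorem \ref{theo:GPS}. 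If you intend to keep your architecture, you must either carry out the tower-rearrangement argument in full or explicitly reduce that step to a second application of Theorem \ref{Ormes97}, which collapses your proof to the one above.
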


For example, one can take $(Y_1,S_1, \nu_1)$ and $(Y_2 , S_2 , \nu_2)$ with any countable groups (eventually trivial) $G_1$ and $G_2$ of measurable eigenvalues. 
The theorem asserts, there are strongly orbit equivalent minimal Cantor systems $(X,T_1)$ and $(X,T_2)$ that are measurably conjugate to the two previous dynamical systems, respectively.
Of course the groups of measurable eigenvalues of $(X,T_1)$ and $(X,T_2)$ are respectively $G_1$ and $G_2$.

\subsection{Rational eigenvalues are not preserved under orbit equivalence}
\label{sec:ratOE}  
Let $G=\ZZ\times \QQ$, $u=(1,1)$ and  $a\in (0,1)\cap \QQ$.
We set $G^+=\{v\in G: \tau_a(v)>0\}\cup \{0\}$, where $\tau_a(v)=av(1)+(1-a)v(2)$, for every $v=(v(1), v(2))\in G$. 
It is straightforward to check that $(G, G^+, u)$ is a simple dimension group verifying the following: 

\begin{itemize}
\item
$\mathbb{Q}(G,G^+,u) = \mathbb{Z}$, 
\item
$I(G,G^+, u) = \mathbb{Q}$, 
\item
$S(G,G^+,u)=\{\tau_a\}$  and 
\item
$G/{\rm Inf}(G)$, $I(G,G^+,u)$ and $\mathbb{Q}$ are isomorphic. 
\end{itemize}

From Theorem \cite{Herman&Putnam&Skau:1992} there exists minimal Cantor systems having $(G,G^+ , u)$ as a dimension group (up to isomorphism).

Proposition \ref{prop:eigincludedinimage} implies that $E(X,T) \subset I(X,T) = I(G , G^+ , u) = \mathbb{Q}$.
Using Theorem \ref{theo:gpso} one obtains $E(X,T) = \mathbb{Q}(G,G^+,u) = \mathbb{Z}$.
Thus, for every minimal Cantor system $(X,T)$ such that $K^0(X,T)$ is isomorphic to $(G,G^+,u)$, the group of eigenvalues $E(X,T)$ is equal to $\ZZ$.

Nevertheless $I(X,T)=\QQ$, so $I(X,T)/E(X,T)=\QQ/\ZZ$ is a torsion group. 
It is not difficult to see that ${\rm Inf}(K^0(X,T))$ is not trivial.

\begin{remark}
Thus, Theorem \ref{theo:main} is not true when ${\rm Inf}(K^0(X,T))$ is not trivial.
\end{remark}

As $(X,T)$ is uniquely ergodic, then, as observe in Section \ref{sec:infint},
$K^0(X,T)/{\rm Inf}(K^0(X,T))$ is isomorphic to $(I(X,T), I(X,T)\cap
\RR^+,1) = (\mathbb{Q} , \mathbb{Q}^+ , 1)$.

On the other hand, since every minimal Cantor system $(Y,S)$ whose simple dimension group is isomorphic to $(\QQ, \QQ^+,1)$ verifies $E(Y,S)=\QQ$ (due to Proposition \ref{prop:eigincludedinimage} and Theorem \ref{theo:gpso}) and is orbit equivalent to $(X,T)$ (by \cite[Theorem 2.2]{Giordano&Putnam&Skau:1995}), we deduce that the continuous rational eigenvalues are not invariant under orbit equivalence, unlike strong orbit equivalence. 

\begin{remark}
This example also shows that the groups of eigenvalues which are realizable among the class associated to $G$ (i.e., a class of strong orbit equivalence) are not necessarily realizable by systems in the class determined by $G/{\rm Inf}(G)$ (i.e., the corresponding class of orbit equivalence) and viceversa.
\end{remark}


\subsection{$I(X,T)/E(X,T)$ can be a torsion group, even if the rational subgroup of $I(X,T)$ is cyclic}
\label{sec:alpha}
Let $\alpha\in (0,1)$ be an irrational number such that $\alpha^2\in \ZZ+\alpha\ZZ$, e.g. the inverse of the golden mean. Let $(X,T)$ be the Sturmian subshift with angle $2\alpha$ (we refer to \cite{Morse&Hedlund:1940} for the definition). Let us recall that  its dimension group is isomorphic to $(\ZZ+2\alpha \ZZ,\ZZ+2\alpha \ZZ \cap \RR^{+}, 1)$.  Let $(Y,S)$ be any system such that its simple dimension group is isomorphic to $(\ZZ+\alpha\ZZ, ZZ+\alpha\ZZ \cap \RR^{+}, 1)$.
These groups having a unique trace, $(Y,S)$ and $(X,T)$ are uniquely ergodic. 
 
The rational subgroup of $\ZZ+\alpha\ZZ$ being $\mathbb{Z}$, Theorem \ref{theo:ormes} ensures that we can choose $(Y,S)$ having no non trivial measurable eigenfunctions; i.e., $(Y,S)$ is {\em weakly mixing}. 
We consider the product system $(X \times Y, T \times S)$ with the product action, i.e. $T\times S(x,y) = (Tx, Sy)$ for any $x\in X, y\in Y$. 

We will show that 
\begin{enumerate}
\item for the image subgroup, we have $I(X\times Y,T\times S)=  \ZZ+\alpha\ZZ$,

\item for the set of additive eigenvalues  $E(X\times Y, T\times S)=\ZZ+2\alpha\ZZ$.
\end{enumerate}
Thus the quotient group $I(X\times Y, T\times S)/ E(X\times Y,  T \times  S)$ will be isomorphic to $\ZZ/2\ZZ$.

We call $\pi_X$ and $\pi_Y$ the projections of $X\times Y$ on $X$ and $Y$ respectively, and $\mu$ and $\nu$ the unique invariant probability measures of $(X,T)$ and $(Y,S)$ respectively.  Let $\lambda$ be the Lebesgue measure in $\sS^1$. We denote $(\sS^{1}, R_{2\alpha})$ the rotation by angle $2\pi 2\alpha$ on the circle $\sS^{1}$.

Since $(Y,S)$ is weakly mixing, it is disjoint from $(\sS^1 , R_{2\alpha})$ (see Theorem 6.27 in \cite{Glasner:2003}) and thus,
the product measure $\lambda\times \nu$ is the unique invariant probability  measure of $(\sS^1\times Y, R_{2\alpha}\times S)$.
Since $(X,T)$ and $(\sS^1, R_{2\alpha})$ are measure theoretically conjugate, clearly $\mu\times \nu$ is the unique invariant probability measure of $(X\times Y, T\times S)$.
Thus $(X\times Y,T\times S)$ is uniquely ergodic. Notice moreover that every open set has a  positive measure. The Ergodic Theorem ensures then  that the system $(X\times Y, T\times S)$ is minimal.

One can checks, for example using Lemma 2.6 in \cite{Glasner&Weiss:1995}, that 
\begin{eqnarray*}
\{ \mu (A); \ A \hbox{ clopen subset of } X \} &= (\mathbb{Z} + 2 \alpha \mathbb{Z})\cap [0,1],  \textrm{ and} \\
\{ \nu (B); \ B \hbox{ clopen subset of } Y \} &= (\mathbb{Z} + \alpha \mathbb{Z})\cap [0,1].
\end{eqnarray*}
Also  notice that   any clopen set $C\subseteq X\times Y$ is a finite union of clopen sets of the kind $A\times B$, where $A$ and $B$ are clopen subsets of $X$ and $Y$ respectively. 
Hence, by the very definition of $\alpha$, we get
$$
I(X\times Y,T\times S)=\langle \{\mu(A)\nu(B): A\subseteq X, B\subseteq Y \mbox{ clopen subsets} \} \rangle =\ZZ+\alpha\ZZ.
$$

Fundamental properties of Sturmian subshifts ensure there exists $\phi:X\to \sS^1$ an almost 1-1 factor map from $(X,T)$ to $(\sS^1,R_{2\alpha})$.
The function $\phi\circ \pi_X$ is then a factor map of the product system. This shows that $2\alpha\in E(X\times Y,T\times S)$.


 
 We will show that $\alpha$ is not an additive eigenvalue of the system $(X\times Y, T\times S)$. 
 
Suppose there exists a continuous eigenfunction  $f  \colon X\times Y \to \sS^{1}$, such that $ f \circ  T\times S= e^{2i \pi \alpha} f$.
Since the map  $Id \times S$, product of the identity with the map $S$,  commutes with the product action, the map $f \circ Id \times S$ is also a continuous eigenfunction associated with the same  eigenvalue. So there is a constant $\lambda \in \sS^{1} $ such that $f \circ Id \times S = \lambda f$. It follows for any $x\in X$, the map $y \mapsto f(x,y)$ is a continuous eigenfunction of the system $(Y, S)$ associated to the eigenvalue $\lambda$.
The system $(Y,S)$  being weakly mixing, we get $\lambda =1$ and $f(x,y)$ does not depend on $y$, we denote this last value $f(x)$. So the map $x \mapsto f(x)$ is a continuous eigenfunction of the system $(X,T)$ associated with the eigenvalue $e^{2i \pi \alpha}$. This is impossible because this Sturmian subshift is an almost one-to-one extension of $(\sS^{1}, R_{2\alpha})$.

\medskip

We conclude that $E(X\times Y , T\times S) = \mathbb{Z} + 2\alpha \mathbb{Z}$.
 
\medskip


According to Theorem \ref{theo:main}, the infinitesimal subgroup of $K^0(X\times Y, T\times S)$ must be non-trivial. 
Let us give an example of a non-trivial infinitesimal element in the system $(X \times Y, T \times S)$.

\medskip 

From Lemma \ref{lemma 0}, there exists  a function $g \in C(Y, \ZZ)$ such that $\int g d \nu =2 \alpha$. 

{\it Claim.} For any function  $f \in C(X,\ZZ)$ such that $\int f d\mu = \int g d\nu$, the function $F : (x,y) \in X\times Y \mapsto f(x)-g(y) $ is a non trivial infinitesimal.


\medskip

A standard computation show us that $\int_{X \times Y} f(x)-g(y) d \mu \times \nu =0$.
Therefore it remains to prove that it is not a coboundary of $C(X\times Y , \ZZ)$.

Let us assume that the function $f-g$ is such a coboundary.
Then, there exists a function  $H \in C(X\times Y)$  such that 
$$
f(x)-g(y) = H(x,y)-H(Tx, Sy) \hspace{1cm} \forall x\in X, y\in Y.
$$

By taking the integral of the former equality for the measure $\mu$, we obtain 
\begin{eqnarray*}
 g(y) =&  \int f d\mu - \int H (x,y) d\mu(x) + \int H(Tx, Sy) d\mu(x)\\
  =& \int gd\nu -    \int H(x,y) d\mu(x) + \int H(x, Sy) d\mu(x).
\end{eqnarray*}

By the Lebesgue's dominated convergence Theorem, the function $h \colon y \mapsto \int H(x,y) d\mu(x)$ is continuous. So $g - \int g d\nu = h\circ S - h$  is a real coboundary. 
Then, $g$ taking integer values, the function $y \mapsto \textrm{exp }(2i\pi h(y))$ defines a continuous eigenfunction associated to the additive eigenvalue $-\int g d\nu = -2 \alpha$ for the system $(Y,S)$. 
This is impossible because this system is weakly mixing.  
This proves our claim.

\section{Some results about realization.}

\begin{definition}
Let $(G,G^+,u)$ be a simple dimension group with distinguished order unit. We define
$\E(G,G^+,u)$ as the collection of all the subgroups $\Gamma$ of $\RR$
for which there exists a minimal Cantor system $(X,T)$ such that
$K^0(X,T)$ and $E(X,T)$ are isomorphic to $(G,G^+,u)$ and $\Gamma$
respectively.
\end{definition}

In this section we are interested in a characterization of the family $\E(G,G^+,u)$, for a given simple dimension group.  Most of our results are based in \cite{Sugisaki:2011}.

\medskip

\begin{remark}{\rm 
Proposition \ref{prop:eigincludedinimage} implies that the elements in $\E(G,G^+,u)$ are subgroups of $I(G,G^+,u)$. If in addition $\inf(G)=\{0\}$,  from Theorem \ref{theo:main} we get the following:
$$
\E(G,G^+,u)\subseteq \{\Gamma: \mbox{ subgroup of  } I(G,G^+,u) \mbox{ such that } I(G,G^+,u)/\Gamma \mbox{ is torsion free }\}.
$$
}
\end{remark}

\subsection{Basic example} Let $\alpha$ be an irrational number. 
Consider $G=\ZZ+\alpha\ZZ$, $G^+=G\cap \RR^+$ and $u=1$. 
Since the infinitesimal subgroup of $(G,G^+,u)$ is trivial,  the collection   $\E(G,G^+,u)$ is a subfamily of  $\{\ZZ, \ZZ+\alpha\ZZ\}$.  
It is known that the dimension group associated to  the Sturmian subshift  with angle $\alpha$  is isomorphic to $(G,G^+,u)$ (see \cite{Morse&Hedlund:1940}  \cite{Dartnell&Durand&Maass:2000}).
Moreover, it is an almost 1-1 extension of the rotation with angle $\alpha$.
Hence its subgroup of eigenvalues equals $G$. 
On the other hand, Theorem \ref{Ormes97} implies  there exists a minimal Cantor system having no non trivial eigenvalues whose dimension group is isomorphic to $(G,G^+,u)$. Thus we get
$$
\E(G, G^+, u)=\{\ZZ, \ZZ+\alpha\ZZ\}.
$$


\subsection{Eigenvalues and dimension subgroups.}

Let $(G,G^+,u)$ and  $(H,H^+,w)$ be two simple dimension group with distinguished order unit. An {\it order embedding} is a monomorphism  $i:H\to G$ such that $i(h)\in G^+$ if and only if $h\in H^+$ and $i(w)=u$. An order embedding always induces an affine homomorphism $i^*: S(G)\to S(H)$ by 
$$
i^*(\tau)(h)=\tau(i(h)), \mbox{ for every } \tau\in S(G) \mbox{ and } h\in H.
$$

We use the next two lemmas to show that a minimal Cantor system and any of its almost 1-1 extensions share their maximal equicontinuous factor. 

The second lemma is a converse of the first one.  
A proof of the next result, in a more general context, can be
found in \cite[Chapter 9]{Auslander:1988}.
\begin{lemma}\label{proximal to eigenvalues}
Let $\pi:(X,T)\to (Y,S)$ be a proximal extension  of minimal
Cantor systems, then $(X,T)$ and $(Y,S)$ have the same maximal
equicontinuous factor.
\end{lemma}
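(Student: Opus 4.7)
The plan is to show that the maximal equicontinuous factors of $(X,T)$ and $(Y,S)$ are each a factor of the other, and then to invoke the rigidity of minimal equicontinuous systems to conclude they are conjugate. Denote by $\rho_X \colon (X,T) \to (Z_X,R_X)$ and $\rho_Y \colon (Y,S) \to (Z_Y,R_Y)$ the maximal equicontinuous factor maps.

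First I would record the elementary fact that a minimal equicontinuous system has a trivial proximal relation. Such a system is conjugate to a rotation on a compact metrizable abelian group, hence admits a compatible invariant metric for which the action is by isometries; so if $(e_1,e_2)$ is proximal, then $d(e_1,e_2) = d(R^{n_k}e_1, R^{n_k}e_2) \to 0$ forces $e_1 = e_2$.

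Next I would descend $\rho_X$ through $\pi$. Given $x_1, x_2 \in X$ with $\pi(x_1)=\pi(x_2)$, the proximality hypothesis yields that $(x_1,x_2)$ is a proximal pair of $(X,T)$. Since factor maps always send proximal pairs to proximal pairs, $(\rho_X(x_1), \rho_X(x_2))$ is proximal in the equicontinuous system $(Z_X,R_X)$, hence by the previous step is diagonal. Thus $\rho_X$ is constant on the fibres of $\pi$, and a standard passage to the quotient provides a continuous equivariant surjection $\tilde\rho\colon Y\to Z_X$. This exhibits $(Z_X,R_X)$ as an equicontinuous factor of $(Y,S)$, so by the universal property of $\rho_Y$ there is a factor map $(Z_Y,R_Y) \to (Z_X,R_X)$. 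Conversely, $\rho_Y\circ\pi$ is an equicontinuous factor of $(X,T)$, so the universal property of $\rho_X$ supplies a factor map $(Z_X,R_X)\to (Z_Y,R_Y)$.

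The final step is the rigidity statement: two minimal equicontinuous systems which are factors of each other must be conjugate, and this is the only place where real work is needed. Modelling both as rotations $R_{g_i}$ on compact abelian metrizable groups $G_i$, with $g_i$ a topological generator, any factor map $(G_1,R_{g_1})\to (G_2,R_{g_2})$ corresponds, after normalisation, to a continuous surjective group homomorphism sending $g_1$ to $g_2$; composing the two directions gives a continuous endomorphism of $G_1$ (respectively $G_2$) which fixes the dense subgroup generated by $g_1$, hence equals the identity. I expect this structural rigidity of equicontinuous factors — rather than the descent argument — to be the only nontrivial ingredient, and it can be cited from the standard treatment of the maximal equicontinuous factor (e.g.\ as in \cite{Auslander:1988}).
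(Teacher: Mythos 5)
Your argument is correct, but it is worth noting that the paper does not actually prove this lemma: it only remarks that a proof, in a more general context, can be found in \cite[Chapter 9]{Auslander:1988}, where the result is obtained through the machinery of the equicontinuous structure relation (the regionally proximal relation) for general flows. Your route is a self-contained and more elementary alternative: (i) equicontinuous minimal systems are distal, so factor maps (which preserve proximality, by uniform continuity on a compact space) collapse the fibres of a proximal extension inside $Z_X$; (ii) since $\pi$ is a continuous surjection of compact metric spaces it is a quotient map, so $\rho_X$ descends to a factor map $Y\to Z_X$, and the universal property of the maximal equicontinuous factor then gives factor maps in both directions between $(Z_X,R_X)$ and $(Z_Y,R_Y)$; (iii) the rigidity step, via the Halmos--von Neumann representation of a minimal equicontinuous system as a minimal rotation on a compact monothetic metrizable abelian group, correctly shows that any normalized factor map between such rotations is a continuous epimorphism determined by the image of the topological generator, so that the two compositions fix dense subgroups and are the identity, forcing both maps to be conjugacies. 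Each step is sound (the one place to be slightly more explicit is that a factor map between minimal rotations becomes a group homomorphism only after translating so that it sends identity to identity, and that the two normalizations are compatible under composition, which you implicitly use). What the citation to Auslander buys is generality (arbitrary group actions, no metrizability); what your argument buys is a short, direct proof using only the universal property of the maximal equicontinuous factor and the structure of minimal equicontinuous $\ZZ$-systems, which is entirely adequate in the Cantor-system setting of the paper.
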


\begin{lemma}\label{almost 1-1 to proximal}
Let $\pi: (X,T)\to (Y,S)$ be an almost 1-1 extension of  compact
systems, such that $(Y,S)$ is minimal. Then $\pi$ is a proximal
extension.
\end{lemma}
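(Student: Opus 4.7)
The plan is to use the singleton fiber point together with minimality of $(Y,S)$ to drive both points of an arbitrary fiber of $\pi$ to the same limit along a common subsequence of iterates, which is exactly proximality.

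First, I would fix a point $x_0 \in X$ witnessing the almost 1-1 property: $\pi^{-1}(\pi(x_0)) = \{x_0\}$, and set $y_0 = \pi(x_0)$. Now take any pair $x_1, x_2 \in X$ with $\pi(x_1) = \pi(x_2) = y$; the goal is to produce a sequence of integers $n_k$ along which $d(T^{n_k}x_1, T^{n_k}x_2) \to 0$.

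The key idea is to use minimality of $(Y,S)$ to find integers $n_k$ with $S^{n_k} y \to y_0$. Since $\pi \circ T = S \circ \pi$, we obtain $\pi(T^{n_k} x_i) = S^{n_k} y \to y_0$ for $i=1,2$. By compactness of $X$, I would extract a common subsequence (which I still denote $n_k$) along which both $T^{n_k} x_1 \to x_1^*$ and $T^{n_k} x_2 \to x_2^*$ converge. Continuity of $\pi$ then forces $\pi(x_i^*) = y_0$, and by the singleton-fiber assumption $x_1^* = x_2^* = x_0$. Consequently $d(T^{n_k} x_1, T^{n_k} x_2) \to 0$, which shows $(x_1, x_2)$ is a proximal pair.

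Since $(x_1, x_2)$ with $\pi(x_1)=\pi(x_2)$ was arbitrary, this proves $\pi$ is a proximal extension. There is no real obstacle here: the argument is purely topological, relying only on compactness of $X$, continuity of $\pi$, minimality of $(Y,S)$, and the single-point fiber hypothesis. The only point that requires a moment of care is the double subsequence extraction, which is handled by the standard diagonal/compactness argument in a metric compact space.
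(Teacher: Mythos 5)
Your proof is correct and follows essentially the same route as the paper's: minimality of $(Y,S)$ pushes the common image orbit toward the point with a singleton fiber, compactness extracts convergent subsequences upstairs, and the unique-preimage hypothesis forces the two limits to coincide. The only difference is cosmetic — you argue directly that the pair is proximal, while the paper phrases the same argument as a proof by contradiction.
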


\begin{proof}
Let $y\in Y$ be an element having only one pre-image by $\pi$. If $\pi$ is injective, then the result
is trivial. We can assume then there exist $x'\neq x''$ in $X$
such that $\pi(x')=\pi(x'')=y'\in Y$. Suppose that $x'$ and $x''$
are not proximal. This means there exist $\varepsilon>0$ and
$n_0\in \NN$, such that for every $n\geq n_0$,
\begin{equation}\label{eq1-almost 1-1}
d(T^n(x'),T^n(x''))>\varepsilon.
\end{equation}
Since $(Y,S)$ is minimal, there exists a subsequence
$(S^{n_i}(y'))_{i\geq 0}$ of the orbit of $y'$ that converges to
$y$. By compactness, taking subsequences if needed, we can suppose
$(T^{n_{i}}(x'))_{i\geq 0}$ and $(T^{n_{i}}(x''))_{i\geq 0}$
converging to some  $z'$ and $z''$ respectively. Inequality
(\ref{eq1-almost 1-1}) ensures that $z'\neq z''$, and since $\pi$
is continuous, we have
$\lim_{i\to\infty}\pi(T^{n_{i}}(x'))=\pi(z')$ and
$\lim_{i\to\infty}\pi(T^{n_{i}}(x''))=\pi(z'')$. On the other
hand, the choice of $(n_{i})_{i\geq 0}$ implies that
$$\pi(z')=\lim_{i\to\infty}\pi(T^{n_{i}}(x'))=\lim_{i\to\infty}S^{n_{i}}(y')= y,$$
$$\pi(z'')=\lim_{i\to\infty}\pi(T^{n_{i}}(x''))=\lim_{i\to\infty}S^{n_{i}}(y'')= y,$$
which contradicts the fact that $y$ has only one pre-image.
\end{proof}

Let us recall a consequence of  \cite[Theorem 1.1]{Sugisaki:2011} (see Corollary 1.2 in \cite{Sugisaki:2011}).

\begin{theorem}\label{theorem-sugisaki}
Suppose that $(Y,S)$ is a uniquely ergodic minimal Cantor system and $(G, G^+,u)$ is a simple dimension group with distinguished order unit satisfying the following assumptions:
\begin{itemize}
\item[(i)] there is an order embedding $i: K^0(Y,S)\to (G, G^+,u)$,
\item[(ii)] $G/i(K^0(Y,S))$ is torsion free.
\end{itemize}
Then there exists a minimal Cantor system $(X,T)$ such that  $K^0(X,T)$ is isomorphic to $(G, G^+, u)$  and such that there is an almost one-to-one factor map $\pi:(X,T)\to (Y,S)$.
\end{theorem}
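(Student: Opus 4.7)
The plan is to realize $(X,T)$ as a Bratteli--Vershik system on a properly ordered simple Bratteli diagram built as a carefully controlled extension of a diagram representing $(Y,S)$. First, I would represent $(Y,S)$ by an ordered simple Bratteli diagram $B_Y = (V^Y, E^Y, \leq_Y)$ whose associated dimension group is isomorphic to $K^0(Y,S)$; after telescoping I may assume that all incidence matrices $M_n^Y$ have strictly positive entries and that the sequence of vertex-weight vectors converges to the measure vector of the unique invariant measure of $(Y,S)$.

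Second, I would inductively enlarge $B_Y$ to a larger ordered Bratteli diagram $B = (V, E, \leq)$ by adding extra vertices and edges at each level, while keeping $V^Y$ and $E^Y$ as subsets. The construction must ensure: (a) the dimension group of $B$ is isomorphic to $(G, G^+, u)$ via an isomorphism whose restriction to the sub-inductive-limit generated by $V^Y$ coincides with the given order embedding $i$; (b) every incidence matrix of $B$ has strictly positive entries, so that $B$ is simple and the resulting system $(X,T)$ is minimal; and (c) the total normalized weight of the extra vertices tends to zero as $n \to \infty$. The torsion-freeness hypothesis on $G/i(K^0(Y,S))$ is the crucial algebraic input here: it guarantees that the quotient is again an unperforated Riesz group, and an Effros--Handelman--Shen-type inductive argument then allows the successive extra rows of the incidence matrices to be realized compatibly with the positive cone inherited from $(G,G^+,u)$.

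The factor map $\pi \colon X \to Y$ would be the natural projection obtained by forgetting edges pointing to extra vertices: an infinite path in $B$ restricts to an infinite path in $B_Y$, and this assignment is continuous and intertwines the Vershik maps. Minimality of $(X,T)$ follows from simplicity of $B$; unique ergodicity of $(Y,S)$ combined with property (c) ensures that the set of paths passing infinitely often through extra vertices has asymptotic measure zero in every telescoping, so the set of $y \in Y$ with $\sharp \pi^{-1}(y) = 1$ contains a dense $G_\delta$, which suffices by the definition of an almost one-to-one factor map. The main obstacle is the simultaneous fulfillment of (a), (b) and (c): one must produce a sequence of nonnegative integer rectangular matrices whose inductive limit is exactly $(G, G^+, u)$ via the prescribed embedding $i$, while keeping the added rows arithmetically small enough to collapse under the factor map. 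This combinatorial bookkeeping, which is the technical core of Sugisaki's construction in \cite{Sugisaki:2011}, must be carried out level by level and must absorb the fact that $G$ need not split as a direct sum with $i(K^0(Y,S))$ — only the torsion-free quotient property is available.
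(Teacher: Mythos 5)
The paper does not actually prove this statement: it is quoted verbatim as a consequence of Theorem~1.1 (Corollary~1.2) of \cite{Sugisaki:2011}, so there is no in-paper argument to compare yours against. Judged on its own, your proposal correctly identifies the known strategy behind Sugisaki's theorem --- represent $(Y,S)$ as a Bratteli--Vershik system, enlarge the diagram level by level so that the limiting dimension group becomes $(G,G^+,u)$ with the inclusion realizing $i$, and read off an almost one-to-one factor map --- but it is a plan rather than a proof. You explicitly defer the ``combinatorial bookkeeping'' that is the entire technical content: producing the incidence matrices so that (a), (b), (c) hold simultaneously is precisely what needs to be proved, and nothing in your outline shows it can be done. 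In particular, the role of hypothesis (ii) is left vague: the torsion-freeness of $G/i(K^0(Y,S))$ is not used to make the quotient ``an unperforated Riesz group''; it is used so that, after writing $G$ as an increasing union of finitely generated subgroups, each finite-stage copy of $\mathbb{Z}^{C_n}$ coming from $K^0(Y,S)$ sits as a \emph{direct summand} of the corresponding $\mathbb{Z}^{k_n}$ (a subgroup of a finitely generated free abelian group is a summand exactly when the quotient is torsion free), which is what lets one choose the block structure of the enlarged incidence matrices.

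Two further concrete gaps. First, the factor map cannot be defined by ``forgetting edges pointing to extra vertices'': an infinite path in $B$ may pass through extra vertices at infinitely many levels and then does not restrict to an infinite path in $B_Y$. One must instead define $\pi(x)$ as the unique point in the nested intersection of the clopen tower atoms of $(Y,S)$ determined by the initial segments of $x$, and then verify equivariance. Second, ``the set of paths passing infinitely often through extra vertices has asymptotic measure zero'' is a measure-theoretic statement, while almost one-to-one is a topological one; you need to exhibit at least one point of $Y$ whose fibre is a singleton (or a residual set of such points), which requires controlling the combinatorics of the orderings on the added edges, not just the weights of the added vertices. As it stands, the proposal is a faithful summary of the shape of Sugisaki's argument but does not constitute an independent proof.
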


The next result is a direct consequence of  Lemma  \ref{proximal to
eigenvalues}, Lemma \ref{almost 1-1 to proximal} and Theorem \ref{theorem-sugisaki}, but stated in terms of dimension groups.

\begin{proposition}\label{Sugisaki}
Let $(G,G^+,u)$ and  $(H,H^+,w)$ be two simple dimension group with distinguished order unit, such that $(H, H^+,w)$ has a unique trace. Suppose there exists an order embedding $i:H\to G$ with $G/i(H)$  torsion free. Then $\E(H,H^+,w)$ is a subfamily of $\E(G,G^+,u)$.
\end{proposition}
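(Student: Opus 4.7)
The plan is to take an arbitrary $\Gamma \in \E(H,H^+,w)$ and exhibit a minimal Cantor system that realizes both $(G,G^+,u)$ as its dimension group and $\Gamma$ as its additive eigenvalue group. By definition of $\E(H,H^+,w)$, one can pick a minimal Cantor system $(Y,S)$ with $K^0(Y,S)$ isomorphic to $(H,H^+,w)$ and $E(Y,S)$ isomorphic to $\Gamma$. Since the trace space of $K^0(Y,S)$ is affinely isomorphic to $\M(Y,S)$ and the hypothesis says $(H,H^+,w)$ has a unique trace, the system $(Y,S)$ is uniquely ergodic.

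Next I would feed $(Y,S)$ together with the order embedding $i\colon H\to G$ into Theorem \ref{theorem-sugisaki}: hypothesis (i) is exactly the existence of $i$, and hypothesis (ii) is the assumed torsion freeness of $G/i(H)$. This produces a minimal Cantor system $(X,T)$ with $K^0(X,T)$ isomorphic to $(G,G^+,u)$ together with an almost one-to-one factor map $\pi\colon(X,T)\to(Y,S)$.

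It then remains to verify that $E(X,T)\cong \Gamma$. By Lemma \ref{almost 1-1 to proximal}, $\pi$ is a proximal extension, and Lemma \ref{proximal to eigenvalues} then yields that $(X,T)$ and $(Y,S)$ share the same maximal equicontinuous factor. Since for a minimal system the additive group of continuous eigenvalues is identified (via $\alpha \mapsto e^{2i\pi\alpha}$) with the character group of its maximal equicontinuous factor, we obtain $E(X,T)=E(Y,S)\cong \Gamma$, whence $\Gamma \in \E(G,G^+,u)$. The conceptual work is essentially packaged inside Theorem \ref{theorem-sugisaki}; the genuine content of the present proposition is the observation that an almost one-to-one extension has the same continuous eigenvalues as its base, so that the eigenvalue data of $(Y,S)$ lifts unchanged to a system with the larger dimension group. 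No obstacle arises beyond assembling the three cited results in this order; the only point deserving care is the unique ergodicity of $(Y,S)$, which is needed to fit into Theorem \ref{theorem-sugisaki} and follows directly from the uniqueness of the trace on $(H,H^+,w)$.
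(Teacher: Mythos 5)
Your proposal is correct and follows exactly the route the paper takes: the paper states the proposition is a direct consequence of Lemma \ref{proximal to eigenvalues}, Lemma \ref{almost 1-1 to proximal} and Theorem \ref{theorem-sugisaki}, and your argument assembles these three results in precisely that way, with the unique-trace-implies-unique-ergodicity observation supplying the hypothesis of Theorem \ref{theorem-sugisaki} and the identification of continuous eigenvalues with the character group of the maximal equicontinuous factor closing the argument.
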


Recall that for any countable dense subgroup $\Gamma$ of $\RR$ containing $\ZZ$, the dimension group   $(\Gamma, \Gamma\cap\RR^+, 1)$ has only one trace and no non trivial infinitesimal (see \cite{Effros:1981}). 
\begin{proposition}\label{prop19}
Let $(G,G^+,u)$ be a simple dimension group with  distinguished order unit  and with a trivial  infinitesimal subgroup. Then, for any subgroup $\Gamma$  of  $I(G, G^+,u)$  with $\ZZ\subseteq \Gamma$ and $I(G,G^+,u)/\Gamma$ torsion free, the family  $\E(\Gamma, \Gamma\cap \RR^+,1)$ is contained in  $\E(G, G^+,u)$.
 \end{proposition}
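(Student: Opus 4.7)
The plan is to reduce the statement to Proposition \ref{Sugisaki} applied with $(H,H^+,w) = (\Gamma, \Gamma\cap\RR^+, 1)$. So three things have to be verified: that $(\Gamma, \Gamma\cap\RR^+, 1)$ is a simple dimension group with a single trace, that there exists an order embedding $i \colon \Gamma \to G$, and that $G/i(\Gamma)$ is torsion free. Once this is done, Proposition \ref{Sugisaki} yields $\E(\Gamma,\Gamma\cap\RR^+, 1) \subseteq \E(G,G^+,u)$, which is the desired conclusion.

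The first point is standard: as a subgroup of $\RR$, $\Gamma$ is totally ordered, hence trivially Riesz and unperforated, and the inclusion is a trace. Any other trace $\tau$ is forced to be strictly positive (if $\tau(g)=0$ with $g>0$, then $\tau(ng-1) = -1$ for large $n$ while $ng-1>0$, contradicting positivity), and sandwiching $g \in \Gamma$ between rationals $p/q$ yields $\tau(g) = g$.

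For the embedding, I would exploit Corollary \ref{cor:onto} together with the hypothesis ${\rm Inf}(G) = \{0\}$. Fix any trace $\tau \in S(G,G^+,u)$. Corollary \ref{cor:onto} gives that $\tau \colon \tilde{G}/{\rm Inf}(G) \to I(G,G^+,u)$ is an isomorphism of dimension groups with unit; the triviality of ${\rm Inf}(G)$ upgrades this to an order isomorphism $\tau \colon \tilde{G} \to I(G,G^+,u)$ sending $u$ to $1$. Let $j$ denote its inverse, and define $i$ as the composition of the inclusion $\Gamma \hookrightarrow I(G,G^+,u)$, the map $j$, and the inclusion $\tilde{G} \hookrightarrow G$. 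Then $i(1)=u$ and $i$ is injective; moreover, every element $i(h) \in \tilde{G}$ satisfies $\tau'(i(h)) = h$ for every trace $\tau'$, so the characterization $G^+ = \{g : \tau'(g) > 0 \ \forall \tau'\} \cup \{0\}$ shows that $i$ is both order preserving and order reflecting.

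For torsion freeness of $G/i(\Gamma)$, suppose $g \in G$, $h \in \Gamma$, $n \geq 1$ satisfy $ng = i(h)$. Applying any trace $\tau'$ gives $n\tau'(g) = \tau'(i(h)) = h$, a value independent of $\tau'$. Hence $g \in \tilde{G}$ and $h/n = \tau(g)$ lies in $I(G,G^+,u)$. The assumption that $I(G,G^+,u)/\Gamma$ is torsion free then forces $h/n \in \Gamma$, and since $\tau$ is an isomorphism on $\tilde{G}$, $g = j(h/n) = i(h/n) \in i(\Gamma)$. The main obstacle is conceptual rather than technical: one must identify $\tilde{G}$ as the natural ``home'' for $\Gamma$ inside $G$. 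Once the isomorphism $\tau \colon \tilde{G} \to I(G,G^+,u)$ is in hand, both the order embedding and the torsion-freeness of $G/i(\Gamma)$ follow by routine verification.
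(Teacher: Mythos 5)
Your proposal is correct and follows essentially the same route as the paper: both use Corollary \ref{cor:onto} and the triviality of $\mathrm{Inf}(G)$ to identify $\tilde{G}$ with $I(G,G^+,u)$, pull $\Gamma$ back to an order-embedded copy $H\subseteq \tilde{G}\subseteq G$ with $G/H$ torsion free (via torsion-freeness of $G/\tilde{G}$ plus the hypothesis on $I(G,G^+,u)/\Gamma$), and then apply Proposition \ref{Sugisaki}. You simply spell out the verifications (unique trace on $\Gamma$, order preservation, the torsion computation) that the paper leaves as routine.
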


\begin{proof} Let $\tau: \tilde{G}\to I(G,G^+.u)$ be the  morphism given by Corollary \ref{cor:onto}. Since the infinitesimal subgroup is trivial, it is an isomorphism. Let $(H,H^+, u)$ be the dimension group image  of  $(\Gamma, \Gamma \cap\RR^+, 1)$ by the inverse $\tau^{-1}$. 
It is easy to check the group $G/\tilde{G}$ is torsion free, so by hypothesis, the quotient group $G/H$ is also torsion free. 
Moreover,  $S(\Gamma, \Gamma \cap \RR^+, 1)$ has only one element because $S(H,H^+,u)  \simeq S(\Gamma, \Gamma \cap \RR^+, 1)$.  The desired result follows from Proposition \ref{Sugisaki}.
\end{proof}

 As a consequence of these results, we obtain the following characterization.
\begin{corollary}\label{remark22}
The following are equivalent:
\begin{enumerate}
\item For any countable dense subgroup $\Gamma$  of $\RR$ containing $\ZZ$,   there is a Cantor minimal system  $(X,T)$ with  $E(X,T)= \Gamma$, and  $K^{0}(X,T) \simeq (\Gamma, \Gamma \cap \RR^+,1)$, i.e.
$$\Gamma\in \E(\Gamma, \Gamma \cap \RR^+,1). $$

\item For any  simple dimension group with distinguished order unit $(G,G^+,u)$ and with no non trivial  infinitesimal,
$$
\E(G,G^+,u)=\{\Gamma;  \Gamma \mbox{ is a subgroup of  } I(G,G^+,u) \mbox{ such that } I(G,G^+,u)/\Gamma \mbox{ is torsion free}\}.
$$
\end{enumerate}
\end{corollary}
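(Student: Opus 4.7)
The plan is to prove the two implications separately, each by stitching together results that are already in place. The corollary is really a conceptual repackaging of Proposition~\ref{prop19} plus Theorem~\ref{theo:main}, so no new computation is needed.

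For the implication $(2)\Rightarrow (1)$, I would apply $(2)$ to the dimension group $(G,G^+,u) = (\Gamma, \Gamma\cap\RR^+, 1)$ itself. As recalled just before Proposition~\ref{prop19}, when $\Gamma$ is a countable dense subgroup of $\RR$ containing $\ZZ$, this is a simple dimension group with trivial infinitesimal subgroup and a unique trace (the inclusion into $\RR$), so $I(G,G^+,u) = \Gamma$. The right-hand side of $(2)$ becomes the collection of subgroups $\Gamma'$ of $\Gamma$ with $\Gamma/\Gamma'$ torsion free; the choice $\Gamma' = \Gamma$ is obviously in this collection. Hence $\Gamma \in \E(\Gamma,\Gamma\cap\RR^+,1)$, which is exactly $(1)$.

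For the converse $(1)\Rightarrow (2)$, I would first note that the inclusion ``$\subseteq$'' of $(2)$ is already recorded in the Remark preceding the statement: Proposition~\ref{prop:eigincludedinimage} gives $E(X,T) \subseteq I(X,T)$, and Theorem~\ref{theo:main} gives torsion-freeness of the quotient under the trivial-infinitesimal hypothesis. The content therefore lies in the inclusion ``$\supseteq$''. So I would fix a subgroup $\Gamma \subseteq I(G,G^+,u)$ with $\ZZ \subseteq \Gamma$ and $I(G,G^+,u)/\Gamma$ torsion free, and try to realize $\Gamma$ as the group of continuous eigenvalues of some minimal Cantor system with dimension group $(G,G^+,u)$. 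Hypothesis $(1)$, applied to $\Gamma$, furnishes a minimal Cantor system $(Y,S)$ with $K^0(Y,S)\simeq (\Gamma,\Gamma\cap\RR^+,1)$ and $E(Y,S) = \Gamma$; in particular $\Gamma \in \E(\Gamma,\Gamma\cap\RR^+,1)$. Since $(G,G^+,u)$ has trivial infinitesimal subgroup and $\Gamma$ meets the hypotheses of Proposition~\ref{prop19}, I can invoke that proposition to obtain $\E(\Gamma,\Gamma\cap\RR^+,1)\subseteq \E(G,G^+,u)$, and in particular $\Gamma \in \E(G,G^+,u)$.

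There is no real obstacle here; the only point that requires care is verifying that all hypotheses of Proposition~\ref{prop19} are met in the second implication, namely that $\Gamma$ is sandwiched between $\ZZ$ and $I(G,G^+,u)$ with torsion-free quotient, and that the infinitesimal subgroup of $G$ is trivial. All of this is granted by the assumptions of $(2)$, so the argument is immediate once the roles of the various dimension groups are correctly identified. The heavy lifting — producing the minimal Cantor system with prescribed dimension group via an almost one-to-one extension à la Sugisaki — is already absorbed into Proposition~\ref{prop19} and hence into Theorem~\ref{theorem-sugisaki}.
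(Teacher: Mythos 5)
Your proof is correct and takes essentially the same route as the paper, which disposes of $(2)\Rightarrow(1)$ by direct specialization to $(\Gamma,\Gamma\cap\RR^+,1)$ and obtains $(1)\Rightarrow(2)$ by combining hypothesis $(1)$ with Proposition~\ref{prop19} (the reverse inclusion being the Remark based on Proposition~\ref{prop:eigincludedinimage} and Theorem~\ref{theo:main}). Your write-up simply makes explicit the steps the paper compresses into two sentences.
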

\begin{proof}
 It is obvious that (1) is a consequence of (2). Proposition \ref{prop19} and (1) imply  (2). 
\end{proof}

\begin{proposition}\label{lemma 4}
Let $(G,G^+,u)$ be a simple dimension group with distinguished order unit and  let $\Gamma$ be a subgroup of $I(G,G^+,u)$  verifying the following: 
\begin{itemize}
\item $\Gamma$  is generated by a family of rationally independent numbers containing $1$.
 \item $I(G,G^+,u)/\Gamma$ is torsion free. 
\end{itemize}
Then  $\E(\Gamma, \Gamma\cap\RR^+,1)$ is contained in $\E(G,G^+,u)$.
 \end{proposition}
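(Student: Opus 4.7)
My plan is to apply Proposition \ref{Sugisaki} with $H=\Gamma$, which reduces the problem to constructing an order embedding $i \colon (\Gamma,\Gamma\cap\RR^+,1)\to (G,G^+,u)$ such that $G/i(\Gamma)$ is torsion free. Note that $(\Gamma,\Gamma\cap\RR^+,1)$ has a unique trace (the identity), since $\Gamma$ is a subgroup of $\RR$ containing $1$, so the hypothesis of Proposition \ref{Sugisaki} on the uniqueness of the trace is automatic.

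To build $i$, I will use the rational independence assumption: $\Gamma$ is free abelian with basis $\{\alpha_j\}_{j\in J}$, with some $\alpha_0=1$. Each $\alpha_j$ lies in $I(G,G^+,u)$, so by Corollary \ref{cor:onto} I can pick $g_j \in \tilde{G}$ with $\tau(g_j)=\alpha_j$ for every trace $\tau\in S(G,G^+,u)$; for $j=0$ I simply take $g_0=u$. Extending by $\ZZ$-linearity defines a group morphism $i\colon\Gamma\to G$ that satisfies $\tau\circ i=\mathrm{Id}_\Gamma$ for every trace $\tau$, and $i(1)=u$. The relation $\tau\circ i=\mathrm{Id}_\Gamma$ gives at once that $i$ is injective and an order embedding: for nonzero $\gamma\in\Gamma$, $\gamma>0$ is equivalent to $\tau(i(\gamma))>0$ for all $\tau$, which is equivalent to $i(\gamma)\in G^+$.

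The main step is to verify that $G/i(\Gamma)$ is torsion free. Suppose $g\in G$ and $kg=i(\gamma)$ for some integer $k\geq 1$ and some $\gamma\in\Gamma$. For any two traces $\tau_1,\tau_2$ we have $k\tau_1(g)=\gamma=k\tau_2(g)$, so $\tau_1(g)=\tau_2(g)$; hence $g\in\tilde{G}$. Letting $\beta=\tau(g)\in I(G,G^+,u)$ denote the common value, the identity $k\beta=\gamma\in\Gamma$ together with the torsion-freeness of $I(G,G^+,u)/\Gamma$ forces $\beta\in\Gamma$. Now $g-i(\beta)$ has trace zero for every $\tau$ and satisfies $k(g-i(\beta))=i(\gamma)-i(k\beta)=0$; since $G$, being unperforated, is torsion free as an abelian group, we conclude $g=i(\beta)\in i(\Gamma)$. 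Proposition \ref{Sugisaki} then yields $\E(\Gamma,\Gamma\cap\RR^+,1)\subseteq\E(G,G^+,u)$.

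The essential point is this last step: torsion-freeness of $G/i(\Gamma)$ requires \emph{both} the hypothesis on $I(G,G^+,u)/\Gamma$ (to control the trace-visible part of $g$) \emph{and} the unperforatedness of $G$ (to kill a potentially nonzero infinitesimal residue $g-i(\beta)$); neither hypothesis alone would suffice.
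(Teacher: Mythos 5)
Your proposal is correct and follows essentially the same route as the paper: use Lemma \ref{lemma 0} (via Corollary \ref{cor:onto}) to choose $g_j$ with constant trace $\alpha_j$, extend $\ZZ$-linearly over the rationally independent basis to get an order embedding, and feed it into Proposition \ref{Sugisaki}. The only difference is that you spell out the verification that $G/i(\Gamma)$ is torsion free (reducing to $\tilde{G}$, invoking torsion-freeness of $I(G,G^+,u)/\Gamma$, and killing the infinitesimal residue by torsion-freeness of $G$), which the paper leaves as ``not difficult to see''; your filled-in argument is sound.
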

{\rm   Remark that Proposition \ref{lemma 4} includes the case where $\Gamma$ is finitely generated.  Example \ref{sec:ratOE} shows that Proposition \ref{lemma 4}  becomes false whenever the group  $\Gamma$  is not generated by a rationally independent family.}

\begin{proof}
Let $\{\alpha_i\}_{i\geq 0}$ be a rationally independent family generating $\Gamma$. Without loss of generality, we can assume that  $\alpha_{0}=1$.  From Lemma \ref{lemma 0}, for every $i\in \NN$, there exists a $g_i\in G$ such that $\tau(g_i)=\alpha_i$, for every $\tau\in S(G,G^+,u)$. We choose $g_{0} = u$. 

For  any $\alpha\in \Gamma$, there exists a unique sequence of integers $(m_i)_{i\geq 0}$ with   $m_i=0$ except for a finite number of $i$'s  such that  $\alpha=\sum_{i\geq 0}m_i\alpha_i$. This implies that the function $\phi: \Gamma\to G$ given by $\phi(\alpha)=\sum_{i\geq 0}m_ig_i$ is a well defined one-to-one  homomorphism. It is not difficult to see that this is an order embedding  such that $G/\phi(\Gamma)$ is  torsion free. Since $(\Gamma, \Gamma\cap\RR^+, 1)$ has only one trace, from Proposition \ref{Sugisaki}  we obtain the desired property.
\end{proof}

\subsection{Necessary conditions for $\Gamma\in \E(\Gamma,\Gamma\cap\RR^+,1)$. }
Let $(G,G^+,u)$ be a simple dimension group with distinguished order unit having no non trivial infinitesimal and with a unique trace. So it is isomorphic to the dimension group $(\Gamma, \Gamma \cap \RR^+, 1)$, where $\Gamma=I(G,G^+,u)$.  We suppose here, that $\Gamma\in \E(\Gamma,\Gamma \cap \RR^+,1)$. That is, there exists a uniquely ergodic minimal Cantor system $(X,T)$ whose dimension group is isomorphic to $(\Gamma, \Gamma \cap \RR^+,1)$ and such that $E(X,T)=\Gamma$.

We use the notations of  the sections \ref{subsec:partandto} and \ref{sec:necessarycond}. Recall that for every $n\geq 1$,  $\mu_n=(\mu_n(k))_{k=1}^{C_n}$ denotes the vector of measures of the bases of the partition $\P_n$, corresponding to the unique invariant probability measure $\mu$ of $(X,T)$  (see Section \ref{subsec:partandto} for definitions). Since $(X,T)$ is uniquely ergodic, the group $\Gamma$ is generated by $\{\mu_n(k): 1\leq k\leq C_n, n\geq 1\}$, which implies that for every $m\geq 1$ and every $1\leq k\leq C_m$ the number $\mu_{m,k}$ is in $E(X,T)$. From Lemma \ref{lemma:thelemma} we get that for every $n$ sufficiently large, there exist an integer vector $w_n$ and a real vector $v_n$ such that
\begin{equation}\label{limite}
P_nH_1\mu_{m}(k)=v_n+w_n \mbox{ and }\sum_{l\geq n}\| P_{l,n}v_n\|_{\infty}<\infty.
\end{equation}
Multiplying by $\mu_n^T$ the first equation we get but the normalization conditions (see Equation \eqref{eq:vectormeasure} and Lemma \ref{lemma:orthogonalite})
$$
\mu_{m}(k)=\mu_n^Tw_n.
$$
On the other hand, we have 
$$
\mu_{m}(k)=\mu_n^TP_{n,m}(\cdot,k).
$$
Since the infinitesimal subgroup is trivial, the previous two equations implies that for $n$ sufficiently large 
\begin{equation}\label{igualdad}
w_n=P_{n,m}(\cdot,k).
\end{equation}
Equations (\ref{limite}) and (\ref{igualdad})  imply
\begin{equation}\label{eqe1}
\sum_{n\geq 1}\max_{i}\|h_{n,i}\mu_{m}-P_{n,m}^T(\cdot,i)\|_{\infty}<\infty.
\end{equation}
On the other hand, the unique ergodicity of the system $(X,T)$ implies that the rows of the matrix $P_{n,m}$ converges with $n$ (after normalization) to $\mu_m$. That is
$$
\lim_{n\to \infty}\max_i\left \|\mu_m-\frac{1}{h_{n,i}}P_{n,m}^T(\cdot,i) \right \|_{\infty}=0.
$$
Thus from (\ref{eqe1}), we deduce  that if $E(X,T)=\Gamma$, then the rate of convergence of the rows of  $P_{n,m}$  to the direction generated by $\mu_m$ has to be extremely fast. 



\bibliographystyle{plain}
\bibliography{CDP2-bis}

\end{document}